\renewcommand{\email}[2][]{%
  \ifx\emails\@empty\relax\else{\g@addto@macro\emails{,\space}}\fi%
  \@ifnotempty{#1}{\g@addto@macro\emails{\textrm{(#1)}\space}}%
  \g@addto@macro\emails{#2}%
}
\def\PP{{{\rm l}\kern - .15em {\rm P} }}
\def\PN2{{\PP_{N}-\PP_{N-2}}}
\newcommand{\cO}{\mathcal{O}}
\newcommand{\bphi}{\boldsymbol{\varphi}}
\newcommand{\btau}{\boldsymbol{\tau}}
\newcommand{\ba}{\boldsymbol{a}}
\newcommand{\bb}{\boldsymbol{b}}
\newcommand{\be}{\boldsymbol{e}}
\newcommand{\bH}{\boldsymbol{H}}
\newcommand{\bu}{\boldsymbol{u}}
\newcommand{\bw}{\boldsymbol{w}}
\newcommand{\bx}{\boldsymbol{x}}
\newcommand{\bX}{\boldsymbol{X}}
\newcommand{\tA}{\tilde{A}}
\newcommand{\deleted}[1]{{}}
\newtheorem{remark}{Remark}[section]
\newtheorem{lemma}{Lemma}[section]
\newtheorem{theorem}{Theorem}[section]
\newtheorem{definition}{Definition}[section]
\numberwithin{equation}{section}
\definecolor{greenrb}{rgb}{0.2,0.6,0.2}
\title{Verifiability 
of the Data-Driven Variational Multiscale \\
Reduced Order Model}
\author{Birgul Koc}
\address[BK]{
Departamento EDAN \& IMUS, 
Universidad de Sevilla,
Campus de Reina Mercedes, 41012, Sevilla, Spain
  }
\email[BK]{ birkoc@alum.us.es
}
\author{Changhong Mou}
\address[CM]{
Department of Mathematics, University of Wisconsin, Madison, WI 53706, USA 
  }
\email[CM]{ cmou3@wisc.edu
}
\author{Honghu Liu}
\address[HL]{Department of Mathematics, Virginia Tech, Blacksburg, VA 24061, USA}
\email[HL]{hhliu@vt.edu}
\author{Zhu Wang}
\address[ZW]{Department of Mathematics, University of South Carolina, Columbia, SC 29208, USA}
\email[ZW]{wangzhu@math.sc.edu}
\author{Gianluigi Rozza}
\address[GR]{mathLab, Mathematics Area, SISSA, 
I-34136 Trieste, Italy }
\email[GR]{gianluigi.rozza@sissa.it}
\author{Traian Iliescu}
\address[TI]{Department of Mathematics, Virginia Tech, Blacksburg, VA 24061, USA}
\email[TI]{iliescu@vt.edu}
\begin{document}

\maketitle

\begin{abstract}
In this paper, we focus on the mathematical foundations of reduced order model (ROM) closures.
First, we extend the verifiability concept from large eddy simulation to the ROM setting.
Specifically, we call a ROM closure model verifiable if a small ROM closure model error (i.e., a small difference between the true ROM closure and the modeled ROM closure) implies a small ROM error.
Second, we prove that {the} data-driven ROM closure {studied here} (i.e., the data-driven variational multiscale ROM) is verifiable.
Finally, we investigate the verifiability of the data-driven variational multiscale ROM in the numerical simulation of the one-dimensional Burgers equation and a two-dimensional flow past a circular cylinder at Reynolds numbers $Re=100$ and $Re=1000$.
\keywords{
Reduced order model 
\and 
variational multiscale 
\and 
data-driven model
\and
verifiability
}
\end{abstract}

\section{Introduction}

Full order models (FOMs) are computational models obtained with classical numerical methods (e.g., finite element or finite difference methods).
In the numerical simulation of fluid flows, FOMs often yield high-dimensional (e.g., $\cO(10^6)$) systems of equations. 
Thus, the computational cost of using FOMs in important many-query fluid flow applications (e.g., uncertainty quantification, optimal control, and shape optimization) can be prohibitively high.

Reduced order models (ROMs) are computational models that yield systems of equations whose dimensions are dramatically lower than those corresponding to FOMs.
For example, in the numerical simulation of fluid flows that are dominated by recurrent spatial structures (e.g., flow past bluff bodies), the dimensions of the resulting system of equations can be $\cO(10)$ for ROMs and $\cO(10^6)$ for FOMs, while the ROM and FOM accuracy is of the same order.
Thus, ROMs have been used in many-query fluid flow applications to reduce the computational cost of FOMs.
Probably the most popular type of ROM used in these applications is the Galerkin ROM (G-ROM), which is constructed by using the Galerkin method.
The G-ROM is based on a simple yet powerful idea:
Instead of using millions or even billions of general purpose basis functions (as in classical  Galerkin methods, such as the tent functions in the finite element method), G-ROM uses a lower-dimensional data-driven basis.
Specifically, the available numerical or experimental data is used to build a few ROM basis functions that model the spatial structures that dominate the flow dynamics.

The G-ROM has been successful in the efficient numerical simulation of relatively simple laminar flows, e.g., flow past a circular cylinder at low Reynolds numbers.
However, the standard G-ROM generally fails in the numerical simulation of turbulent flows.
The main reason is that, in order to ensure a relatively low computational cost, only a few ROM basis functions are used to build the standard G-ROM.
These few ROM basis functions can represent the simple dynamics of laminar flows, but not the complex dynamics of turbulent flows.
Thus, in the numerical simulation of turbulent flows, the standard G-ROM is equipped with a ROM closure model, i.e., a correction term that models the effect of the discarded ROM basis functions on the ROM dynamics.

Over the last two decades, ROM closure modeling has witnessed a dynamic development.
A survey of current ROM closure modeling strategies is presented in~\cite{ahmed2021closures}.  
Three main types of ROM closure models have been proposed: 
(i) Functional ROM closures are constructed by using physical insight. 
Classical examples of functional ROM closures include eddy viscosity models~\cite{wang2012proper}, in which the main role of the ROM closure model is to dissipate energy.
(ii) Structural ROM closures are a different class of models that are developed by using mathematical arguments.  
Examples of structural ROM closures include the approximate deconvolution ROM~\cite{xie2017approximate}, the Mori-Zwanzig formalism~\cite{chorin2015discrete,lu2020data,parish2017unified}, and the parameterizing manifolds~\cite{chekroun2019variational,CLM21_BE,chekroun2014stochastic}.
(iii) The most active research area in ROM closure modeling is in the development of data-driven ROM closures in which available data is utilized to build the ROM closure model.  
An example of data-driven ROM closure is the data-driven variational multiscale ROM (DD-VMS-ROM) that was proposed in~\cite{mou2021data,xie2018data}.
The DD-VMS-ROM has been investigated numerically in~\cite{koc2019commutation,mohebujjaman2019physically,mou2020data,mou2021data,xie2018data,xie2020closure}.
However, providing mathematical support for the DD-VMS-ROM is an open problem.

In classical CFD, there exists extensive mathematical support for closure modeling.
For example, the monographs~\cite{BIL05,john2004large,rebollo2014mathematical} present the mathematical analysis for many large eddy simulation (LES) models, as well as the numerical analysis of their discretization.
In contrast, despite the recent increased interest in ROM closure modeling~\cite{ahmed2021closures}, the mathematical foundations of ROM closures are relatively scarce. 
Indeed, the ROM closure models are generally assessed heuristically:
The proposed ROM closure model is used in numerical simulations and is shown to improve the numerical accuracy of the standard G-ROM and/or other ROM closure models.
However, fundamental questions in ROM numerical analysis are still wide open for most of these ROM closure models:
Is the proposed ROM closure model stable?
Does the ROM closure model converge?
If so, what does it converge to?

Only the first steps in the numerical  analysis  of ROM closures have been taken.
To our knowledge, the first numerical analysis of a ROM closure model was performed in~\cite{borggaard2011artificial}, where an eddy viscosity ROM closure model (i.e., the Smagorinsky model) was analyzed in a simplified setting. 
Next, the numerical analysis of eddy viscosity variational multiscale ROMs was carried out in~\cite{iliescu2013variational,iliescu2014variational}.  
Finally, the numerical analysis of the Samagorinsky model in a reduced basis method (RBM) setting was performed in~\cite{ballarin2020certified,rebollo2017certified}.
We note that numerical analysis for regularized ROMs, which are related to but different from ROM closures, was performed in~\cite{gunzburger2020leray,xie2018numerical}; {see} also~\cite{azaiez2021cure} for related work.

In this paper, we take a next step in the development of numerical analysis for ROM closures and prove verifiability for a data-driven ROM closure model, i.e., the DD-VMS-ROM proposed in~\cite{mou2021data,xie2018data}. 
Specifically, we show that the ROM closure model in the DD-VMS-ROM is accurate in a precise sense.
More importantly, we prove that the DD-VMS-ROM is verifiable, i.e., we prove that since the DD-VMS-ROM closure model is accurate, the DD-VMS-ROM solution is accurate.
We note that this is not a trivial task:
The Navier-Stokes equations (and their filtered counterparts), which are the mathematical models that {we} use in this paper, are nonlinear and sensitive to perturbations, so adding to them a relatively small term (i.e., the ROM closure term) does not automatically imply that the resulting solution will be close to the original one.
To prove that the DD-VMS-ROM closure model is verifiable, we use the following ingredients:
(i) We use ROM spatial filtering to determine an explicit formula for the exact ROM closure term, which needs to be modeled.
(ii) We use data-driven modeling to construct the DD-VMS-ROM closure model and show that this closure model is accurate, i.e., it is close to the exact ROM closure model.
(iii) We use physical constraints to increase the accuracy of our data-driven ROM closure model.
We note that the verifiability concept was defined in an LES context (see, e.g.,~\cite{kaya2002verifiability} as well as~\cite{BIL05} for a survey). 
However, to our knowledge, this is the first time the verifiability concept is defined and investigated in a ROM context.

The rest of the paper is organized as follows:
In Section~\ref{sec:g-rom}, we outline the construction of the standard G-ROM.
In Sections~\ref{sec:les-rom} and \ref{sec:dd-vms-rom}, we use ROM spatial filtering to build LES-ROMs and utilize data-driven modeling to build the closure model in the DD-VMS-ROM, respectively.
In Section~\ref{sec:dd-vms-rom-verifiability}, we prove the main theoretical result in this paper, i.e., we prove that the DD-VMS-ROM is verifiable.
In Section \ref{sec:num}, we illustrate the theoretical developments.
Specifically, for the Burgers equation and the two-dimensional flow past a circular cylinder, we show the following: 
(i) the ROM closure error (i.e., the difference between the true ROM closure term and the DD-VMS-ROM closure term) is small and it becomes smaller and smaller as we increase the ROM dimension; and 
(ii) as the ROM closure error decreases, so does the ROM error (i.e., the DD-VMS-ROM is verifiable). 
Finally, in Section~\ref{sec:conclusions}, we present the conclusions of our theoretical and numerical investigations and outline several directions for future research.

\section{Galerkin ROM (G-ROM)}
    \label{sec:g-rom}

In this section, we outline the construction of the Galerkin ROM (G-ROM) for the Navier-Stokes equations (NSE):
 \begin{eqnarray}
     && \frac{\partial \boldsymbol u}{\partial t}
     - Re^{-1} \Delta \boldsymbol u
     + \boldsymbol u \cdot \nabla \boldsymbol u
     + \nabla p
     = {\boldsymbol f} ,
     \label{eqn:nse-1} \\
     && \nabla \cdot \boldsymbol u
     = 0 ,
     \label{eqn:nse-2}
 \end{eqnarray}
 where $\boldsymbol u$ is the velocity, $p$ the pressure, and $Re$ the Reynolds number.
The NSE~\eqref{eqn:nse-1}--\eqref{eqn:nse-2} are equipped with an initial condition and, for simplicity, homogeneous Dirichlet boundary conditions.
To build the ROM basis, we assume that we have access to the snapshots $\{\boldsymbol u_h^0, ... , \boldsymbol u_h^M \}$, which are the coefficient vectors of the FEM approximations of the NSE~\eqref{eqn:nse-1}--\eqref{eqn:nse-2} at the time instances $t_0, t_1, \ldots, t_M$, respectively.
The number of snapshots, $M$, is an arbitrary positive integer.
In what follows, we assume that $M$ is fixed.
Next, we use these snapshots and the proper orthogonal decomposition (POD)~\cite{HLB96,volkwein2013proper} to construct an orthonormal ROM basis $\{ \boldsymbol \varphi_1, ... , \boldsymbol \varphi_d\}$, which generates the ROM space $\bX^d$ defined as follows:
 \begin{eqnarray} \label{eqn:d-dim-set}
 \bX^d:= \text{span} \{ \boldsymbol \varphi_1, ... , \boldsymbol \varphi_d\} , 
 \end{eqnarray}
 where $d$ is the number of linearly independent snapshots $\{\boldsymbol u_h^0, ... , \boldsymbol u_h^M \}$.
Thus, $d$ is the maximal dimension of a basis that spans the same space as the space spanned by the given snapshots.
 By using the ROM basis functions in \eqref{eqn:d-dim-set}, we construct $\boldsymbol u_d$, which is the $d$-dimensional ROM approximation of NSE velocity, $\bu$:
 \begin{eqnarray}
 \label{eqn:ud-representation}
   \displaystyle \boldsymbol u_d(\bx,t) =  \sum_{i=1}^{d}  ( \boldsymbol a_d)_i(t) \, \boldsymbol \varphi_i(\bx) .
 \end{eqnarray}
To find the vector of ROM coefficients $\ba_d$ in~\eqref{eqn:ud-representation}, we use the Galerkin projection, i.e., we replace $\bu$ with $\bu_d$ in the  NSE~\eqref{eqn:nse-1}--\eqref{eqn:nse-2}, and then project the resulting equations onto the ROM space, $\bX^d$.
This yields the $d$-dimensional Galerkin ROM (G-ROM):
\begin{eqnarray}  
\begin{aligned}
     ( (\boldsymbol u_d)_t, \boldsymbol v_d ) 
     + Re^{-1} (\nabla \boldsymbol u_d, \nabla \boldsymbol v_d )
      + (\boldsymbol u_d \cdot\nabla \boldsymbol u_d, \boldsymbol v_d ) 
     = (\boldsymbol f, \boldsymbol v_d ), \qquad \forall \, \boldsymbol v_d \in \boldsymbol X^d , 
     \end{aligned}
     \label{eqn:d-dimensional-g-rom}
 \end{eqnarray}
{
where $( \cdot , \cdot )$ denotes the $L^2$ inner product.
}
{
We note that the G-ROM~\eqref{eqn:d-dimensional-g-rom}
does not include a pressure term, since the ROM basis functions are assumed to be discretely divergence-free.
This is the case if, e.g., the snapshots are discretely divergence-free. 
Indeed, when POD is used to construct the ROM basis (as in our numerical investigation), the ROM basis functions are linear combinations of the snapshots.
Since the snapshots are discretely divergence-free, so are the ROM basis functions.
We also note that alternative 
formulations within the RBM framework are used in, e.g.,~\cite{ali2020stabilized,ballarin2015supremizer,hess2020reduced,hesthaven2015certified,martini2018certified,quarteroni2015reduced}.
}

By using the backward Euler time discretization, we get the full discretization of the $d$-dimensional G-ROM~\eqref{eqn:d-dimensional-g-rom} as follows: 
$\forall \, n=1,...,M$
 \begin{eqnarray}  
\begin{aligned}
     \left( \frac{\boldsymbol u_d^{n}-\boldsymbol u_d^{n-1}}{\Delta t}, \boldsymbol v_d \right) 
     + Re^{-1} (\nabla \boldsymbol u_d^n, \nabla \boldsymbol v_d )
    + (\boldsymbol u_d^n\cdot\nabla  \boldsymbol u_d^n, \boldsymbol v_d ) 
     = (\boldsymbol f^n, \boldsymbol v_d ),  \quad  \forall \, \boldsymbol v_d \in \boldsymbol X^d , 
     \end{aligned}
     \label{eqn:d-dimensional-g-rom-full-disc}
 \end{eqnarray}
where the superscript $n$ denotes the approximation at time step $n$.
To obtain the finite-dimensional representation of the $d$-dimensional G-ROM~\eqref{eqn:d-dimensional-g-rom-full-disc}, we choose
$\boldsymbol v_d$ to be $\bphi_1,\ldots,\bphi_d$, which yields the following system of equations: 
    \vspace*{-0.1cm}
    \begin{eqnarray}
	    \frac{\boldsymbol a_d^n - \boldsymbol a_d^{n-1}}{\Delta t }
	    = \boldsymbol b^n + \boldsymbol A \, \boldsymbol a_d^n + (\boldsymbol a_d^n)^{\top} \, \boldsymbol B \, \boldsymbol a_d^n ,
    \label{eqn:g-rom-nse}
	    \\[-0.6cm]
	    \nonumber	   
    \end{eqnarray}
    where 
   $\boldsymbol a_d^n$ is the vector of unknown ROM coefficients,  $\boldsymbol b$ is a $d \times 1$ vector,   
    $\boldsymbol A$ is a $d \times d$ matrix, and $\boldsymbol B$ is a $d \times d \times d$ tensor.
The system of equations in~\eqref{eqn:g-rom-nse} can be written componentwise as follows:
    \begin{eqnarray}
 \frac{(\boldsymbol a_d^n)_i - (\boldsymbol a_d^{n-1})_i}{\Delta t } = {\boldsymbol b^n_i} + \sum_{m=1}^d \boldsymbol A_{im} \boldsymbol a_m^n
     +\sum_{m=1}^d \sum_{k=1}^d \boldsymbol B_{imk} \boldsymbol a_m^n \boldsymbol a_k^n\,,
     \quad 
     \, 1 \leq i \leq d \,,  
    \end{eqnarray}
    where, for $1 \leq i, m, k \leq d $,
    \begin{align} 
        \boldsymbol b_i^n &= ( \boldsymbol f^n, \boldsymbol \varphi_i), \label{eqn:g-rom_operators-b}\\
	    \boldsymbol A_{im}
	    &= - Re^{-1} \, \left( \nabla \bphi_m , \nabla \bphi_i \right) \label{eqn:g-rom_operators-A},
	    \\
	    \boldsymbol B_{imk}
	    &= - \bigl( \bphi_m \cdot \nabla \bphi_k , \bphi_i \bigr) 
	    \, \label{eqn:g-rom_operators-B}.   
    \end{align}

\section{Large Eddy Simulation ROM (LES-ROM)}
    \label{sec:les-rom}

The ROM closure that we investigate in this paper (i.e., the DD-VMS-ROM presented in Section~\ref{sec:dd-vms-rom}) is a large eddy simulation ROM (LES-ROM).
Thus, in this section, we briefly outline the construction of LES-ROMs.

LES-ROMs are ROM closures that have been developed over the last decade (see~\cite{wang2012proper,xie2017approximate} and the survey in Section V in~\cite{ahmed2021closures}, as well as related approaches in~\cite{girfoglio2021pod,girfoglio2021pressure}).
LES-ROMs {utilize} mathematical principles used in classical LES~\cite{BIL05,sagaut2006large} to construct ROM closure models for ROMs in under-resolved regimes, i.e., when the number of ROM basis functions is insufficient to represent the complex dynamics of the underlying flows.
Classical LES and LES-ROMs are similar in spirit:
They both aim at approximating the large scales in the flow at the available coarse resolution (e.g., coarse mesh in classical LES and not enough ROM basis functions in LES-ROMs).
Furthermore, they both use spatial filtering to define the large scales that need to be approximated.
We emphasize, however, that there are also major differences between classical LES and LES-ROMs.
One of the main differences is the type of spatial filtering used to define the large flow structures.
In classical LES, continuous filters (e.g., the Gaussian filter) are used to define the filtered equations at a continuous level.
In contrast, in LES-ROMs, due to the hierarchical structure of the ROM spaces, the ROM projection (which is a discrete spatial filter) is generally used instead.
(For a notable exception, see the ROM differential filter, which is a continuous spatial ROM filter used in~\cite{xie2017approximate} to construct the approximate deconvolution ROM closure.)
The ROM projection is used, in particular, to build variational multiscale (VMS) ROM closures (see, e.g.,~\cite{bergmann2009enablers,iliescu2013variational,iliescu2014variational,reyes2020projection,stabile2019reduced,wang2012proper} and the VMS-ROM survey in Section V.A in~\cite{ahmed2021closures}), such as the closure that we investigate in this paper, which we describe next.

To construct the DD-VMS-ROM, we start by choosing 
the ``truth" solution, i.e., the most accurate ROM solution that we can construct with the given snapshots.

\begin{definition}[Truth Solution]
    \label{def:truth-solution}
    For fixed $M$ and $d$, we define the $d$-dimensional G-ROM solution of~\eqref{eqn:d-dimensional-g-rom-full-disc} as our ``truth" solution.
\end{definition}

The goal of an LES-ROM is to construct {an $r$-dimensional ROM whose solution,
$\bu_r$, approximates as accurately as possible the large scale component of the truth solution, $P_r(\bu_d)$.} We note that, since $r \ll d$, the LES-ROM development takes place in an under-resolved regime.

In what follows, our goal is to use data to construct an LES-ROM (specifically, the DD-VMS-ROM) whose solutions are as close as possible to $P_r(\bu_d)$, i.e., the ROM projection of the truth solution.  
Thus, in the numerical analysis in Section~\ref{sec:dd-vms-rom-verifiability}, the DD-VMS-ROM solution will be compared to large scale component of the truth solution, which will be considered as \emph{data}.

{In what follows,} 
we use the LES-ROM framework to achieve the following {objectives}:
(i) Use the ROM projection to define the large ROM spatial scales;
(ii) Use the ROM projection to filter the $d$-dimensional G-ROM~\eqref{eqn:d-dimensional-g-rom-full-disc} 
{and} obtain the LES-ROM, i.e., the set of equations for the filtered ROM variables; and
(iii) Finally, use data-driven modeling to construct a ROM closure model for the filtered ROM equations {developed} in step (ii).
In this section, we discuss steps (i) and (ii); in the next section, we discuss step (iii), i.e., we construct the DD-VMS-ROM.

To define the large ROM scales and build the VMS framework, we first decompose the $d$-dimensional ROM space $\boldsymbol X^d$ into two orthogonal subspaces
 \begin{subequations} 
 \begin{align}
 \boldsymbol X^r:= \text{span} \{ \boldsymbol \varphi_1, ... , \boldsymbol \varphi_r\} \label{eqn:r-dim-set}, \\
  ( \boldsymbol X^r)^\perp := \text{span} \{ \boldsymbol \varphi_{r+1}, ... , \boldsymbol \varphi_d\} , \label{eqn:d-r-dim-set}
 \end{align}
 \end{subequations}
 where $\boldsymbol X^r$ contains the first $r$ dominant ROM basis functions, and $(\boldsymbol X^r)^ \perp$, which is orthogonal to $\boldsymbol X^r$, contains the less energetic ROM basis functions. 
We also define the following orthogonal projections:
\begin{definition}[Orthogonal Projections] \label{defn:orthogonal-projs} Let $P_r: L^2 \rightarrow \boldsymbol X^r$ be the orthogonal projection onto $\boldsymbol X^r$, and $Q_r: L^2 \rightarrow (\boldsymbol X^r)^\perp $ be the orthogonal projection onto $(\boldsymbol X^r)^\perp $, which can be defined as
\begin{subequations}
\begin{align}
 P_r (\boldsymbol u) = \sum_{i=1}^r ( \boldsymbol u, \boldsymbol \varphi_i )
 \boldsymbol \varphi_i,  \quad  \boldsymbol u \in L^2, \label{eqn:Pr_defn} \\
 Q_r (\boldsymbol u) = \sum_{i=r+1}^d ( \boldsymbol u, \boldsymbol \varphi_i )
 \boldsymbol \varphi_i,  \quad  \boldsymbol u \in L^2, \label{eqn:Qr_defn}
\end{align}
\end{subequations}
{where $L^2$ denotes the space of square integrable functions on the spatial domain.}
\end{definition}

 Next, in the LES spirit, we decompose the most accurate ROM solution at time step $n$, $\boldsymbol u_d^n$ (i.e., the $d$-dimensional G-ROM solution~\eqref{eqn:d-dimensional-g-rom-full-disc}, which is the ``truth" solution that is employed as a benchmark in our investigation) as
\begin{eqnarray}
 \boldsymbol u_d^n 
 := \underbrace{P_r(\boldsymbol u_d^n)}_{\text{large scales}} 
 + \underbrace{Q_r (\boldsymbol u_d^n)}_{\text{small scales}} ,
 \label{eqn:u-d-decomposition}
\end{eqnarray}  
where $P_r$ and $Q_r$ are the two orthogonal projections in Definition~\ref{defn:orthogonal-projs}.
Equation~\eqref{eqn:u-d-decomposition} represents the LES-ROM  decomposition of the ``truth" solution, $\boldsymbol u_d^n$, into its large scale component, $P_r(\boldsymbol u_d^n)$, and its small scale component, $Q_r(\boldsymbol u_d^n)$.  

The ROM spatial filter that we use to construct the LES-ROM is the ROM projection filter~\cite{oberai2016approximate,wang2012proper}, i.e., the orthogonal projection $P_r$ defined in Definition~\ref{defn:orthogonal-projs}, which satisfies the following equation: 
For given $ \boldsymbol u \in L^2$,
\begin{align}
\displaystyle \big( P_r(\boldsymbol u), \boldsymbol \varphi_i \big)=~ \big( \boldsymbol u, \boldsymbol \varphi_i \big),~~~~~\forall \, i=1,...,r. \label{PF} 
\end{align}

To construct the LES-ROM, we need to construct the equation satisfied by the large scales, $P_r(\boldsymbol u_d^n)$, defined in~\eqref{eqn:u-d-decomposition}.
We note that, by using Definition~\ref{defn:orthogonal-projs}
and the ROM orthogonality property, we obtain the following formula for the large scale component 
$P_r(\boldsymbol u_d^n)$:
\begin{eqnarray} \label{eqn:filtered-FOM}
   \displaystyle P_r(\boldsymbol u_d^n) =   \sum_{i=1}^{r}  
  (\boldsymbol a_d^n)_i \, \boldsymbol \varphi_i.
\end{eqnarray}
To construct the LES-ROM satisfied by $P_r(\bu_d^n)$, we apply the ROM spatial filter, $P_r$, to the equation satisfied by the ``truth'' solution, $\bu_d^n$ (i.e., to the full discretization of the $d$-dimensional G-ROM~\eqref{eqn:d-dimensional-g-rom-full-disc}), we restrict the test functions in~\eqref{eqn:d-dimensional-g-rom-full-disc} to the $r$-dimensional ROM subspace $\bX^r$ defined in \eqref{eqn:r-dim-set}, and we use the decomposition~\eqref{eqn:u-d-decomposition}.
This yields the equations satisfied by the large scales, $P_r(\bu_d^n)$, i.e., the LES-ROM equations: 
\begin{eqnarray} \label{eqn:filtered-weak-form} 
\begin{aligned}
     \left( \frac{P_r(\boldsymbol u_d^n)-P_r(\boldsymbol u_d^{n-1}) }{\Delta t}, \boldsymbol v_r \right) 
     &+ Re^{-1} (\nabla P_r(\boldsymbol u_d^n),  \nabla \boldsymbol v_r )
     + (P_r(\boldsymbol u_d^n)\cdot\nabla P_r(\boldsymbol u_d^n), \boldsymbol v_r ) 
     \\
    &+ \mathcal{E}^n + 
    ( \boldsymbol \tau^{FOM}( \boldsymbol u_d^n) , \boldsymbol v_r ) 
    = 
    (\boldsymbol f^n , \boldsymbol v_r ) \, , \qquad \forall \, \boldsymbol v_r \in \boldsymbol X^r,
     \end{aligned}
 \end{eqnarray} 
where we used that, by~\eqref{PF},  
$(P_r(\boldsymbol f^n), \boldsymbol v_r ) = ( \boldsymbol f^n , \boldsymbol v_r )$.
In the LES-ROM equations~\eqref{eqn:filtered-weak-form},
 the Reynolds stress tensor $\boldsymbol \tau^{FOM}(\boldsymbol u_d^n)$ and commutation error $\mathcal{E}$ are defined as follows:
 \begin{eqnarray}
  \boldsymbol \tau^{FOM} (\boldsymbol u_d^n) := \boldsymbol u_d^n \cdot\nabla \boldsymbol u_d^n  - P_r(\boldsymbol u_d^n) \cdot\nabla  P_r(\boldsymbol u_d^n), 
  \label{eqn:tau} \\
  \mathcal{E}^n: = Re^{-1} (\nabla Q_r (\boldsymbol u_d^n), \nabla \boldsymbol v_r), \label{eqn:ce}
 \end{eqnarray}
 respectively. 
 We note that, to obtain the LES-ROM equations~\eqref{eqn:filtered-weak-form}, we used the fact that the term $(Q_r (\boldsymbol u_d^n), \boldsymbol v_r) $ vanishes since $Q_r (\boldsymbol u_d^n)$ is orthogonal to  any vector in $\boldsymbol X^r$. 
 We also note that the term $(\nabla Q_r(\boldsymbol u_d^n), \nabla \boldsymbol v_r)$ in the commutation error term~\eqref{eqn:ce}  does not vanish since the ROM basis functions are only $L^2$-orthogonal, not $H_0^1$-orthogonal.

 \begin{remark}[Commutation Error]
 In \cite{koc2019commutation}, we investigated the effect of the commutation error~\eqref{eqn:ce} on ROMs.  
 We showed that the commutation error is generally nonzero, but becomes negligible for large $Re$. 
 Since our current investigation centers around LES-ROMs for turbulent flows, for simplicity, 
 we do not consider the commutation error.
\end{remark}

\begin{definition}[Closure Model]
A closure model consists of replacing 
the Reynolds stress tensor $\boldsymbol \tau^{FOM} (\boldsymbol u_d^n)$ in \eqref{eqn:filtered-weak-form} 
with another tensor 
$\boldsymbol \tau^{ROM} (P_r(\boldsymbol u_d^n)) $
depending only on 
$P_r(\boldsymbol u_d^n)$. 
\end{definition} 

Thus, the role of the closure model $\boldsymbol \tau^{ROM}$ is to replace the true closure model $\boldsymbol \tau^{FOM} (\boldsymbol u_d^n)$ (which cannot be computed in $\bX^r$) with a term that can actually be computed in $\bX^r$.
Since a closure model cannot in general be exact (i.e., $\boldsymbol \tau^{FOM} (\boldsymbol u_d^n )\neq \boldsymbol \tau^{ROM} (P_r(\boldsymbol u_d^n))  $),
when $\boldsymbol \tau^{ROM} (P_r(\boldsymbol u_d^n))$ is inserted for $\boldsymbol \tau^{FOM} (\boldsymbol u_d^n) $ in \eqref{eqn:filtered-weak-form} the solution of the resulting system is just an approximation to $P_r(\boldsymbol u_d^n)$. 
We denote this LES-ROM approximation to $P_r(\boldsymbol u_d^n)$ as $\boldsymbol u_r^n$, which can be written as
 \begin{eqnarray} \label{eqn:dd-vms-soln}
   \displaystyle \boldsymbol u_r^n =  \sum_{i=1}^{r} ( \boldsymbol a_r^n)_i \, \boldsymbol \varphi_i.
 \end{eqnarray}
Thus, the LES-ROM equations for $\bu_r^n$ are
\begin{eqnarray}\label{eqn:r-dimensional-dd-vms-rom} 
\begin{aligned}
    \hspace*{-0.8cm} 
     \left( \frac{\boldsymbol u_r^n -\boldsymbol u_r^{n-1}}{\Delta t}, \boldsymbol v_r \right) 
     + Re^{-1} (\nabla \boldsymbol  u_r^n, \nabla \boldsymbol v_r )
    + (\boldsymbol u_r^n \cdot \nabla \boldsymbol u_r^n, \boldsymbol v_r ) 
     + ( \boldsymbol \tau^{ROM} (\boldsymbol u_r^n), \boldsymbol v_r ) 
     = (\boldsymbol f^n, \boldsymbol v_r ), \  \forall \, \boldsymbol v_r \in \boldsymbol X^r.
     \end{aligned}
 \end{eqnarray} 
Inserting \eqref{eqn:dd-vms-soln} into \eqref{eqn:r-dimensional-dd-vms-rom} yields the following matrix form of the LES-ROM:

\begin{eqnarray} 
	\frac{\boldsymbol a_r^n-\boldsymbol a_r^{n-1}}{\Delta t}
	=
	\boldsymbol b^n + \boldsymbol A \boldsymbol a_r^n + (\boldsymbol a_r^n)^T \boldsymbol B \boldsymbol a_r^n 
	+ 
	[-( \boldsymbol \tau^{ROM} (\boldsymbol u_r^n), \boldsymbol \varphi_{i} )_{i=1, \ldots, r}],
	\label{eqn:vms-rom} 
\end{eqnarray}
where the vector $\boldsymbol b^n$, the matrix $\boldsymbol A$, and the tensor $\boldsymbol B$ are defined in \eqref{eqn:g-rom_operators-b}-\eqref{eqn:g-rom_operators-B}, {but {here are} truncated to the first $r$ components, 
{ i.e.,} the indices $i,k,m$ in \eqref{eqn:g-rom_operators-b}-\eqref{eqn:g-rom_operators-B} are restricted between $1$ and $r$. 
We opt for this slight abuse of notation 
{ in order to avoid}
introducing new variables 
{that would overload the presentation.  We also note} 
that $[-( \boldsymbol \tau^{ROM} (\boldsymbol u_r^n), \boldsymbol \varphi_{i} )_{i=1, \ldots, r}]$ in 
{ \eqref{eqn:vms-rom} }
denotes the $r\times 1$ vector whose $i^{\mathrm{th}}$ component is given by $-( \boldsymbol \tau^{ROM} (\boldsymbol u_r^n), \boldsymbol \varphi_{i})$}.

\section{Data Driven Variational Multiscale ROM (DD-VMS-ROM)} 
\label{sec:dd-vms-rom}

In this section, we outline the construction of the data-driven variational multiscale ROM (DD-VMS-ROM) closure model proposed in~\cite{mou2021data,xie2018data}. 
We also describe the physical constraints that we add to the  DD-VMS-ROM in order to increase its stability and accuracy.
The construction of the DD-VMS-ROM is carried out within the LES-ROM framework described in Section~\ref{sec:les-rom}.

To construct the DD-VMS-ROM, we start from the LES-ROM equations~\eqref{eqn:vms-rom}.
First, we notice that since we used the ROM projection as a spatial filter, the LES-ROM~\eqref{eqn:vms-rom} is in fact a variational multiscale ROM (VMS-ROM).
However, the VMS-ROM~\eqref{eqn:vms-rom} is not closed since the closure term $\btau^{ROM}(\bu_r^n)$ still needs to be determined.
To construct a VMS-ROM closure model, we use data-driven modeling.
Specifically, we first postulate a linear ansatz for the VMS-ROM closure term, and then we determine the parameters in the linear ansatz that best match the FOM data.
The linear ansatz for the VMS-ROM closure term can be written as follows:
\begin{eqnarray} \label{eqn:ansatz}
\begin{aligned}
[- ( \boldsymbol \tau^{ROM} (\boldsymbol u_r^n), \boldsymbol \varphi_{i} )_{i=1, \ldots, r}] 
 \approx \widetilde{\boldsymbol A} \, \boldsymbol a_r^n ,
\end{aligned}
\end{eqnarray}
where $\boldsymbol a_r^n$ is the vector of ROM coefficients of the solution $ \boldsymbol u_r^n$; {cf.~\eqref{eqn:dd-vms-soln}}. 
To determine the $r \times r$ matrix $\widetilde{\boldsymbol A}$ in \eqref{eqn:ansatz}, in the offline  stage, we solve the following low-dimensional {\it least squares problem}:

\begin{eqnarray} \label{eqn:least-squares}
\begin{aligned}
	\min_{\widetilde{\boldsymbol A}} \ \sum_{n=1}^{M} 
	\biggl\| 
	- \biggl[ 
		\biggl(\boldsymbol u_d^n \cdot\nabla\boldsymbol u_d^n
		& -  
		P_r(\boldsymbol u_d^n)\cdot\nabla P_r(\boldsymbol u_d^n)\, , \, \boldsymbol \varphi_{i} \biggr)_{i=1, \ldots, r} 
		\biggl]  - \underbrace{ [\bigl( \widetilde{\boldsymbol A} \, {\bb_r^n} \bigr)_{i=1,..,r}] }_{:= [({  \boldsymbol \tau^{ROM}  (P_r({\bu_d^n}))}, \, \boldsymbol \varphi_{i} )_{i=1, \ldots, r}]}
	\biggr\|^2 \, ,
\end{aligned}
\end{eqnarray}
where $\boldsymbol u_d^n$ and $P_r(\boldsymbol u_d^n)$ are obtained from the available FOM data and are defined in \eqref{eqn:ud-representation} and \eqref{eqn:filtered-FOM}, respectively, 
{and 
$\bb_r^n$ is the $r$-dimensional vector that contains the first $r$ entries of the vector $\boldsymbol a_d^n$.
}

\paragraph{Physical Constraint}
In the numerical investigation in~\cite{CSB03}, it was shown that, in the mean, the LES-ROM closure model dissipates energy. 
Thus, to mimic this behavior, in~\cite{mohebujjaman2019physically} we equipped the DD-VMS-ROM with a similar physical constraint.
Specifically, in the least squares problem \eqref{eqn:least-squares}, we added the constraint that $\widetilde{\boldsymbol A}$ be negative semidefinite: 
\begin{eqnarray} \label{eqn:physical-constraint}
 (\boldsymbol a_r^n)^T \widetilde{\boldsymbol A} \boldsymbol a_r^n \leq 0
 \qquad
 \forall \, \ba_r^n \in \R^r.
\end{eqnarray}
{For the numerical results presented in Section~\ref{sec:num}, this condition \eqref{eqn:physical-constraint} is guaranteed by enforcing a particular structure on $\widetilde{\boldsymbol A}$. 
Specifically, we
require the entries of $\widetilde{\boldsymbol A}$ to satisfy 
the following relations:} 
\begin{equation} \label{Eq_phys_constraint}
{\widetilde{A}_{ij} = - \widetilde{A}_{ji}, \; \forall \; i \neq j, \text{ and } \widetilde{A}_{ii} \le 0, \; \forall \; i.}
\end{equation}

Solving the least squares problem~\eqref{eqn:least-squares} with the physical constraint~\eqref{eqn:physical-constraint},  using the resulting matrix $\widetilde{\boldsymbol A}$ in the linear ansatz~\eqref{eqn:ansatz}, and plugging this in the VMS-ROM~\eqref{eqn:vms-rom} 
yields the data-driven variational multiscale ROM (DD-VMS-ROM):
\begin{eqnarray} 
	\frac{\boldsymbol a_r^n-\boldsymbol a_r^{n-1}}{\Delta t}
	= \boldsymbol b^n +
	( \boldsymbol A + \widetilde{\boldsymbol A}) \boldsymbol a_r^n + (\boldsymbol a_r^n)^T \boldsymbol B \boldsymbol a_r^n.
	\label{eqn:dd-vms-rom} 
\end{eqnarray}

\section{Verifiability of the DD-VMS-ROM}
    \label{sec:dd-vms-rom-verifiability}

In this section, we prove the verifiability of the DD-VMS-ROM described in Section~\ref{sec:dd-vms-rom}.
In Section~\ref{sec:verifibiality-limit-consistency-conditions}, we introduce the verifiability and mean dissipativity concepts in the ROM setting.
In Section~\ref{sec:dd-vms-rom-verifiability-proof}, we prove that the DD-VMS-ROM is verifiable.

\subsection{Definition of Verifiability and Mean Dissipativity} \label{sec:verifibiality-limit-consistency-conditions}
The goal of this subsection is to define the verfiability 
of ROM closure models.
Verifiability of closure models has been investigated for decades in classical CFD (see, e.g.,~\cite{kaya2002verifiability} as well as~\cite{BIL05} for a survey of verifiability methods in LES).
We emphasize, however, that, to our knowledge, the verifiability concept has not been defined in a ROM context.
In this section, we take a first step in this direction and define verifiability of ROM closure models.
We also define the mean dissipativity of ROM closures, which will be used in Section~\ref{sec:dd-vms-rom-verifiability-proof} to prove the verifiability of the DD-VMS-ROM.

{
In the remainder of this paper, we also use the following notation: 

\begin{definition}[Generic Constant $C$]         \label{def:constant}
We denote with $C$ a generic constant that can depend on the fixed data (e.g., the solution, $\bu$, the number of snapshots, $M$, the number of linearly independent snapshots, $d$, and the ``truth'' solution, $\bu_d$), but not on the ROM  parameters (e.g., the ROM dimension, $r$, and the ROM solution, $\bu_r$).
\end{definition}
}

\begin{definition}[Verifiability] \label{definition:DD-VMS-ROM-verifiability}

Let the number of snapshots, $M$, (and, thus, the number of linearly independent snapshots, $d$) be fixed.
A ROM closure model is verifiable in the $L^2$ norm,  
{$|| \cdot  ||, $} 
if there is a constant $C$ such that, for all $r \leq d$ and for all $n = 1, \ldots, M$, the following {\it a priori} error bound holds: 

\begin{eqnarray} \label{eqn:DD-VMS-ROM-verifiability}
 \boxed{ 
|| \,P_r(\bu_d^n) - \boldsymbol u_r^n ||^2
\leq 
 C \frac{1}{n} \sum_{j=1}^{n} 
  || P_r(\,\boldsymbol \tau^{FOM}( \boldsymbol u_d^j) - \boldsymbol \tau^{ROM} (P_r(\bu_d^j)\,)||^2 , 
 }
 \end{eqnarray}
 where $\boldsymbol u_d^j$ 
 represents the ``truth" solution (i.e., the $d$-dimensional G-ROM solution of~\eqref{eqn:d-dimensional-g-rom-full-disc}) at $t = t_j, \ j = 1, \ldots, M$, 
 and $\boldsymbol u_r^n$ solves the 
 ROM equipped with the given ROM closure model
 at $t = t_n, \ n = 1, \ldots, M$.
\end{definition}

Definition~\ref{definition:DD-VMS-ROM-verifiability} says that a ROM closure model is verifiable if a small average error in the ROM closure term implies a small error in the LES-ROM approximation.

{
\begin{remark}[\textit{A Priori} Error Bound]
We emphasize that inequality~\eqref{eqn:DD-VMS-ROM-verifiability} in the verifiability definition is an \textit{a priori} error bound.
This ROM error bound is similar to the \textit{a priori} error bounds for classical FOMs, e.g., the FE method, which are often of the following form (see, e.g., Theorem 1.5 in~\cite{thomee2006galerkin}):
\begin{eqnarray}
    \text{error}
    \leq C \left( h^{p_1} 
                + \Delta t^{p_2} \right), 
    \label{eqn:remark-a-priori-1}
\end{eqnarray}
where $h$ is the spatial mesh size, $\Delta t$ is the time step, $p_1$ and $p_2$ are exponents that depend on the particular finite element and time discretization used, and $C$ is a generic constant that can depend on the problem data (including the solution of the continuous problem), but not on the discretization parameters.
As explained in Section 2.4 of~\cite{layton2008introduction}, the \textit{a priori} error bound~\eqref{eqn:remark-a-priori-1} shows asymptotic convergence as $h \rightarrow 0$ and $\Delta t \rightarrow 0$, and can give the asymptotic rate of convergence with respect to the spatial and temporal discretizations. 
We emphasize that one essential feature of the FE \textit{a priori} error bound~\eqref{eqn:remark-a-priori-1} is that it can be proven \textit{before} actually running the FE model (which explains the error bound's \textit{a priori} qualifier).
We note, however, that since the constant $C$ on the right-hand side of~\eqref{eqn:remark-a-priori-1} can depend on the unknown solution of the continuous problem, the \textit{a priori} error bound~\eqref{eqn:remark-a-priori-1} cannot be used to decide where the spatial mesh should be refined or coarsened.
For that purpose, one could instead use \textit{a posteriori} error bounds, in which the right-hand side depends entirely on computable quantities, e.g., the FE solution~\cite{ainsworth2000posteriori}. 

The ROM error bound~\eqref{eqn:DD-VMS-ROM-verifiability} in the verifiability definition is similar to the \textit{a priori} FE error bound~\eqref{eqn:remark-a-priori-1}. 
Indeed, the right-hand side of~\eqref{eqn:DD-VMS-ROM-verifiability} does not depend on the ROM solution and can be evaluated \textit{before} actually running the ROM.
Thus, the ROM error bound~\eqref{eqn:DD-VMS-ROM-verifiability} is an \textit{a priori} error bound, just like the FE error bound~\eqref{eqn:remark-a-priori-1}. 
Furthermore, the right-hand side of~\eqref{eqn:DD-VMS-ROM-verifiability} is the product of a generic constant that does not depend on the ROM discretization parameters, and a term that can be tuned by the user (i.e., the average ROM closure error term). 
Thus, as the average ROM closure error in~\eqref{eqn:DD-VMS-ROM-verifiability} decreases, we expect the ROM error to decrease at the same rate.
Our numerical investigation in Section~\ref{sec:num} shows that this is indeed the case.
There is, however, a difference between the \textit{a priori} ROM error bound~\eqref{eqn:DD-VMS-ROM-verifiability} and the \textit{a priori} FE error bound~\eqref{eqn:remark-a-priori-1}:
The latter depends on two FE parameters that can be easily adjusted (i.e., the spatial mesh size, $h$, and the time step, $\Delta t$).
The former, however, depends on the average ROM closure error, which can be tuned by varying the  parameters in the numerical discretization of the least squares problem~\eqref{eqn:least-squares}.
This process is explained in Sections~\ref{sec:numerical-implementation} and \ref{sec:assessment-results}.

\label{remark:a-priori}
\end{remark}
}

{
\begin{remark}
We note that the terms on the right-hand side of~\eqref{eqn:DD-VMS-ROM-verifiability} in the verifiability definition are the same as those used in the least squares problem~\eqref{eqn:least-squares}.
Furthermore, the $L^2$ norm is used in both \eqref{eqn:DD-VMS-ROM-verifiability} and \eqref{eqn:least-squares}.
Thus, solving the least squares problem~\eqref{eqn:least-squares} to construct the DD-VMS-ROM and proving that the DD-VMS-ROM is verifiable (as we will do in Theorem~\eqref{theorem:main-theorem}) should yield accurate DD-VMS-ROM approximations.
The numerical investigation in Section~\ref{sec:num} will show that, as expected, the DD-VMS-ROM approximations are accurate.
\label{remark:l2-norm}
\end{remark}
}

\begin{definition}[Mean Dissipativity]
\label{definition:mean-dissipativity}
A ROM closure model satisfies the mean dissipativity condition if
{for {the} $\bu_d^n, \bu_r^n$, and $n$ 
given in Definition~\ref{definition:DD-VMS-ROM-verifiability}, the following inequalities are satisfied}:
\begin{eqnarray} \label{eqn:mean-dissipativity-partb}
\boxed{ 
0 \leq ( 
 \boldsymbol \tau^{ROM}(P_r(\bu_d^n)) - \boldsymbol \tau^{ROM} (\boldsymbol u_r^n ) 
\, , \, 
P_r(\bu_d^n) - \boldsymbol u_r^n ) < \infty .
}
\end{eqnarray}
\end{definition}

\subsection{Proof of DD-VMS-ROM's Verifiability}
    \label{sec:dd-vms-rom-verifiability-proof}

In this section, we first prove that the DD-VMS-ROM is mean dissipative.
Then, we use this result to prove that the DD-VMS-ROM is verifiable.

\begin{theorem} \label{theorem:mean-dissipativity}
The DD-VMS-ROM with linear ansatz \eqref{eqn:dd-vms-rom} and physical constraint \eqref{eqn:physical-constraint} satisfies mean dissipativity according to Definition~\ref{definition:mean-dissipativity}.
\end{theorem}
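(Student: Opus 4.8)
The plan is to reduce the $L^2$ inner product in the mean dissipativity condition~\eqref{eqn:mean-dissipativity-partb} to a quadratic form in the closure matrix $\widetilde{\boldsymbol A}$ and then to invoke the physical constraint~\eqref{eqn:physical-constraint}. Fix $n$ as in Definition~\ref{definition:DD-VMS-ROM-verifiability} and set $\be^n := P_r(\bu_d^n) - \bu_r^n$. The first step is to record that $\be^n \in \bX^r$: by~\eqref{eqn:filtered-FOM}, $P_r(\bu_d^n)$ has ROM coefficient vector $\bb_r^n$ (the first $r$ entries of $\ba_d^n$, as in~\eqref{eqn:least-squares}), and by~\eqref{eqn:dd-vms-soln}, $\bu_r^n$ has coefficient vector $\ba_r^n$; hence $\be^n \in \bX^r$ with coefficient vector $\bb_r^n - \ba_r^n$.

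Next I would use the linear ansatz~\eqref{eqn:ansatz}, read as the definition of the ROM closure term through its components against $\bphi_1, \dots, \bphi_r$: for any $\bw \in \bX^r$ with coefficient vector $\boldsymbol c \in \R^r$ one has $-(\boldsymbol\tau^{ROM}(\bw), \bphi_i) = (\widetilde{\boldsymbol A}\, \boldsymbol c)_i$ for $i = 1, \dots, r$. Applying this with $\bw = P_r(\bu_d^n)$ ($\boldsymbol c = \bb_r^n$) and with $\bw = \bu_r^n$ ($\boldsymbol c = \ba_r^n$), and using $\be^n \in \bX^r$ together with the $L^2$-orthonormality of $\{\bphi_i\}_{i=1}^r$, the inner product expands as
\begin{align*}
\bigl( \boldsymbol\tau^{ROM}(P_r(\bu_d^n)) - \boldsymbol\tau^{ROM}(\bu_r^n),\, \be^n \bigr)
&= \sum_{i=1}^r \bigl[ -(\widetilde{\boldsymbol A}\, \bb_r^n)_i + (\widetilde{\boldsymbol A}\, \ba_r^n)_i \bigr] \, (\bb_r^n - \ba_r^n)_i \\
&= - (\bb_r^n - \ba_r^n)^\top \, \widetilde{\boldsymbol A} \, (\bb_r^n - \ba_r^n).
\end{align*}

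It then remains to invoke the physical constraint. The structural relations~\eqref{Eq_phys_constraint} make the off-diagonal part of $\widetilde{\boldsymbol A}$ skew-symmetric and its diagonal nonpositive, so $\boldsymbol x^\top \widetilde{\boldsymbol A}\, \boldsymbol x = \sum_{i} \widetilde{A}_{ii}\, x_i^2 \le 0$ for every $\boldsymbol x \in \R^r$; this is precisely~\eqref{eqn:physical-constraint}. Taking $\boldsymbol x = \bb_r^n - \ba_r^n$ shows the displayed quantity is $\ge 0$, which is the lower bound in~\eqref{eqn:mean-dissipativity-partb}; the upper bound is immediate, since $M$ and $d$ are fixed, $\widetilde{\boldsymbol A}$ is a fixed $r \times r$ matrix, and $\bb_r^n, \ba_r^n$ are finite vectors, so $\bigl| (\bb_r^n - \ba_r^n)^\top \widetilde{\boldsymbol A} (\bb_r^n - \ba_r^n) \bigr| \le \| \widetilde{\boldsymbol A} \|_2 \, \| \bb_r^n - \ba_r^n \|^2 < \infty$. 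The computation is short and I do not anticipate a genuine obstacle; the only points that need care are the bookkeeping of which coefficient vector pairs with which function and the sign convention in~\eqref{eqn:ansatz}. Once the quadratic-form identity in the display is in hand, mean dissipativity is simply the negative semidefiniteness of $\widetilde{\boldsymbol A}$ imposed by the physical constraint.
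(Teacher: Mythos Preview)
Your proposal is correct and follows essentially the same route as the paper: both expand the inner product in the ROM basis, reduce it to the quadratic form $-(\bb_r^n - \ba_r^n)^\top \widetilde{\boldsymbol A}\,(\bb_r^n - \ba_r^n)$, and conclude via the negative semidefiniteness imposed by the physical constraint. Your treatment is slightly more complete in that you explicitly address the finiteness part of~\eqref{eqn:mean-dissipativity-partb}, which the paper leaves implicit.
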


\begin{proof}

The least squares problem \eqref{eqn:least-squares} yields the ROM operator $\widetilde{\boldsymbol A}$ for $-( \boldsymbol \tau^{ROM}(P_r(\boldsymbol u_d^n) , \boldsymbol \varphi_i)$, which is the VMS-ROM closure term. 
We note that the same ROM operator $\widetilde{\boldsymbol A}$ is used to construct the VMS-ROM closure term
$-( \tau^{ROM}(\boldsymbol u_r^n) , \boldsymbol \varphi_i)$. 
Specifically, the ROM operator $\widetilde{\boldsymbol A}$ that is created by solving the least squares problem~\eqref{eqn:least-squares} for the VMS-ROM closure term 
$-( \boldsymbol \tau^{ROM}(P_r(\boldsymbol u_d^n) , \boldsymbol \varphi_i)$
is used in the linear ansatz 
$
-(\boldsymbol \tau^{ROM}(P_r(\boldsymbol u_d^n) , \boldsymbol \varphi_i)_{i=1, \ldots, r} \approx \widetilde{\boldsymbol A} \, \boldsymbol b_r$, where $\boldsymbol b_r^n$ is 
{the $r$-dimensional vector defined in~\eqref{eqn:least-squares}, i.e.,} 
{the} $r$-dimensional vector that contains the first $r$ entries of the vector $\boldsymbol a_d^n$.
The same ROM operator $\widetilde{\boldsymbol A}$ is also used in the linear ansatz~\eqref{eqn:ansatz} for the VMS-ROM closure term 
$-( \boldsymbol \tau^{ROM}(\boldsymbol u_r^n) , \boldsymbol \varphi_i)$:
$-(  \boldsymbol \tau^{ROM}(\boldsymbol u_r^n) , \boldsymbol \varphi_i)_{i=1, \ldots, r} \approx \widetilde{\boldsymbol A} \, \boldsymbol a_r$.
We approximate the VMS-ROM closure terms with these ansatzes and 
obtain the following equalities:
\begin{eqnarray} \label{eqn:proof-part1}
\begin{aligned}
 ( 
   \boldsymbol \tau^{ROM} (P_r(\boldsymbol u_d^n))  - \boldsymbol \tau^{ROM} (\boldsymbol u_r^n )
 \, , \, \boldsymbol \varphi_i )
 & =  \Big( \boldsymbol \tau^{ROM} (P_r(\boldsymbol u_d^n))  \, , \, \boldsymbol \varphi_i \Big) - \Big(  \boldsymbol \tau^{ROM} (\boldsymbol u_r^n ) \, , \, \boldsymbol \varphi_i \Big)  \\
 & = (-\widetilde{\boldsymbol A} \, \boldsymbol b_r^n )_i - (-\widetilde{\boldsymbol A} \, \boldsymbol a_r^n)_i \\
& = \big( -\widetilde{\boldsymbol A} \, (\boldsymbol b_r^n - \boldsymbol a_r^n ) \big)_i  \qquad \forall i=1,..,r .
\end{aligned}
\end{eqnarray}
To prove that the inner product $(   \boldsymbol \tau^{ROM} (P_r(\boldsymbol u_d^n))  - \boldsymbol \tau^{ROM} (\boldsymbol u_r^n ) \, , \,  P_r(\boldsymbol u_d^n)- \boldsymbol u_r^n )$ is non-negative, we use the definitions of $P_r(\boldsymbol u_d^n)$ in \eqref{eqn:filtered-FOM} and $\boldsymbol u_r^n$ in \eqref{eqn:dd-vms-soln},  and rewrite it as follows:
\begin{eqnarray} \label{eqn:proof-part2}
 \begin{aligned}
\Big(\boldsymbol \tau^{ROM} (P_r(\boldsymbol u_d^n))  - \boldsymbol \tau^{ROM} (\boldsymbol u_r^n ) \, , \,  P_r(\boldsymbol u_d^n)- \boldsymbol u_r^n \Big)
& =  \Big( \boldsymbol \tau^{ROM} (P_r(\boldsymbol u_d^n))  - \boldsymbol \tau^{ROM} (\boldsymbol u_r^n )  \, , \,  \sum_{i=1}^{r} (\boldsymbol a_d^n - \boldsymbol a_r^n )_i \, \boldsymbol \varphi_i \, \Big) \\
& = \sum_{i=1}^{r} (\boldsymbol a_d^n - \boldsymbol a_r^n)_i \,  \Big(  \boldsymbol \tau^{ROM} (P_r(\boldsymbol u_d^n))  - \boldsymbol \tau^{ROM} (\boldsymbol u_r^n )  \, , \, \, \boldsymbol \varphi_i \, \Big).
 \end{aligned}
\end{eqnarray}
By applying \eqref{eqn:proof-part1} to \eqref{eqn:proof-part2} and using the physical constraint \eqref{eqn:physical-constraint}, we get
\begin{eqnarray}
 \begin{aligned}
  (\boldsymbol \tau^{ROM} (P_r(\boldsymbol u_d^n))  - \boldsymbol \tau^{ROM} (\boldsymbol u_r^n )  \, , \,  P_r(\boldsymbol u_d^n)- \boldsymbol u_r^n )
& = \sum_{i=1}^{r} (\boldsymbol a_d^n - \boldsymbol a_r^n)_i \,  \big(-\widetilde{\boldsymbol A} \, (\boldsymbol b_r^n - \boldsymbol a_r^n ) \big)_i \\
& = - (\boldsymbol b_r^n - \boldsymbol a_r^n )^T \,  \widetilde{\boldsymbol A} \, (\boldsymbol b_r^n - \boldsymbol a_r^n ) \geq 0 ,
 \end{aligned}
 \label{eqn:proof-2}
\end{eqnarray}
since $\widetilde{\boldsymbol A}$ is negative semi-definite. 
In~\eqref{eqn:proof-2}, we have used that $\boldsymbol b_r^n$ is an $r$-dimensional vector that contains the first $r$ entries of the $\boldsymbol a_d^n$.
The inequality in~\eqref{eqn:proof-2} concludes the proof.
\end{proof}

\begin{remark}
We note that in Theorem~\ref{theorem:mean-dissipativity} we proved the ROM mean dissipativity property only for  $P_r(\boldsymbol u_d^n)$ and $\boldsymbol u_r^n$. 
This is contrast with the FEM context, where mean dissipativity is proven for general FEM functions (see, e.g., \cite{kaya2002verifiability}). {
{However, }
the result presented in Theorem~\ref{theorem:mean-dissipativity} is sufficient for 
{proving }
the verifibility property given in Theorem~\ref{theorem:main-theorem} below.}
\end{remark}

Next, we prove that the DD-VMS-ROM is verifiable.
We note that, as explained in Section~\ref{sec:les-rom}, 
the goal for the DD-VMS-ROM solution is to approximate as accurately as possible $P_r(\bu_d^n)$, which is the large scale component of the $d$-dimensional G-ROM solution~\eqref{eqn:d-dimensional-g-rom-full-disc}, 
{i.e.,} the ``truth'' solution that is employed as a benchmark in our investigation.
{Furthermore, as explained in the second paragraph following Definition~\ref{def:truth-solution}, the ``truth'' solution, $\bu_d$, will be considered as given data.}
We also note that $P_r(\bu_d^n)$ satisfies the LES-ROM equations~\eqref{eqn:filtered-weak-form}, which, for clarity, we rewrite below:
\begin{eqnarray} \label{eqn:filtered-weak-form-full-disc}
\begin{aligned} (\frac{P_r(\bu_d^n)-P_r(\bu_d^{n-1})}{\Delta t}, \boldsymbol v_r  ) 
     + Re^{-1} (\nabla P_r(\bu_d^n),  \nabla \boldsymbol v_r )
     + (P_r(\bu_d^n)\cdot\nabla  P_r(\bu_d^n), \boldsymbol v_r  ) 
     \\
    +  (\boldsymbol \tau^{FOM}( \boldsymbol u_d^n) , \boldsymbol v_r  ) = 
    ( \boldsymbol f^n , \boldsymbol v_r  ) , 
     \end{aligned}
 \end{eqnarray} 
where we used the fact that ($\boldsymbol \tau^{FOM}( \boldsymbol u_d^n) , \boldsymbol v_r  )$ is equal to $(P_r(\boldsymbol \tau^{FOM}( \boldsymbol u_d^n)) , \boldsymbol v_r$).
We also rewrite the full discretization of the DD-VMS-ROM~\eqref{eqn:r-dimensional-dd-vms-rom}: 
\begin{eqnarray}\label{eqn:r-dimensional-dd-vms-rom-full-disc} 
\begin{aligned}
     ( \frac{\boldsymbol u_r^n -\boldsymbol u_r^{n-1} }{\Delta t}, \boldsymbol v_r  ) 
     + Re^{-1} (\nabla \boldsymbol  u_r^n, \nabla \boldsymbol v_r  )
    + (\boldsymbol u_r^n \cdot\nabla  \boldsymbol u_r^n, \boldsymbol v_r  ) &
     \\
     + (  \boldsymbol \tau^{ROM} (\boldsymbol u_r^n), \boldsymbol v_r  )     
     & = (\boldsymbol f^n, \boldsymbol v_r  ).
     \end{aligned}
 \end{eqnarray} 
Furthermore, we use the linear ansatz~\eqref{eqn:ansatz} and the physical 
constraint~\eqref{eqn:physical-constraint} 
 for the ROM closure model in the DD-VMS-ROM~\eqref{eqn:r-dimensional-dd-vms-rom-full-disc}.
 We also choose the initial condition $\boldsymbol u_r^0 = P_r(\boldsymbol u_d^0)$.
 
The DD-VMS-ROM error at time step $n$, which we denote with $\boldsymbol e^n$, is defined as the difference between the large scale component of the ``truth'' solution, $P_r(\bu_d^n)$ (which is the solution of~\eqref{eqn:filtered-weak-form-full-disc}), and the DD-VMS-ROM solution of~\eqref{eqn:r-dimensional-dd-vms-rom-full-disc}, $\boldsymbol u_r^n$: 
$\boldsymbol e^n = P_r(\bu_d^n) - \boldsymbol u_r^n$.

To prove the DD-VMS-ROM's verifiability, we use the following 
bound on the nonlinear term, which is given in Lemma 22 in~\cite{layton2008introduction} (see also Lemma 61.1 in~\cite{sell2013dynamics}):

\begin{lemma}
Let $\Omega \subset \mathbbm{R}^{q}$ be an 
open, bounded set of class $C^2$, with $q=2$ or $3$.
For all $\boldsymbol u, \boldsymbol v, \boldsymbol w \in [\bH_0^1(\Omega)]^{q}$,
\begin{eqnarray}
b(\boldsymbol u, \boldsymbol v, \boldsymbol w) \leq C(\Omega) \sqrt{||\boldsymbol u|| \, ||\nabla \boldsymbol u||} \, || \nabla \boldsymbol v|| \, ||\nabla \boldsymbol w||,
\end{eqnarray}
where 
the trilinear form $b(\cdot,\cdot,\cdot)$~\cite{layton2008introduction,temam2001navier} is defined as
\begin{eqnarray}
b(\bu,\boldsymbol v,\bw) = (\bu \cdot\nabla \boldsymbol v, \bw ). 
\end{eqnarray}
\label{lemma:trilinear-bound}
\end{lemma}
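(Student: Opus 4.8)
The plan is to obtain this estimate in the classical way, from Hölder's inequality together with a Gagliardo--Nirenberg (Ladyzhenskaya-type) interpolation inequality and the Poincaré inequality on $[\bH_0^1(\Omega)]^q$, handling the cases $q=2$ and $q=3$ separately since the relevant Sobolev embeddings differ. No use will be made of the divergence-free constraint; the bound holds for arbitrary $\bu,\bv,\bw\in[\bH_0^1(\Omega)]^q$.

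First I would write $b(\bu,\bv,\bw)=\int_\Omega(\bu\cdot\nabla\bv)\cdot\bw\,dx$ and apply the generalized Hölder inequality with a triple of exponents $(p_1,2,p_3)$, $\tfrac{1}{p_1}+\tfrac12+\tfrac{1}{p_3}=1$, so that the gradient factor is measured in $L^2$ and contributes exactly $\|\nabla\bv\|$. For $q=2$ I would take $p_1=p_3=4$, giving $b(\bu,\bv,\bw)\le\|\bu\|_{L^4}\,\|\nabla\bv\|\,\|\bw\|_{L^4}$; for $q=3$ I would instead take $p_1=3$, $p_3=6$, giving $b(\bu,\bv,\bw)\le\|\bu\|_{L^3}\,\|\nabla\bv\|\,\|\bw\|_{L^6}$. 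The $L^3$--$L^6$ split in three dimensions is the one slightly delicate point: the more obvious $L^4$--$L^4$ split produces the exponents $\|\bu\|^{1/4}\|\nabla\bu\|^{3/4}$ from the 3D Ladyzhenskaya inequality, which is not the $\sqrt{\|\bu\|\,\|\nabla\bu\|}$ form claimed, so one has to choose the exponents so that the powers of $\|\bu\|$ and $\|\nabla\bu\|$ come out to exactly $\tfrac12$ each.

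Next I would bound the outer two factors by interpolation and Sobolev embedding --- which is where the $C^2$ regularity of $\partial\Omega$ enters (Lipschitz would in fact suffice) --- followed by the Poincaré inequality $\|\cdot\|\le C(\Omega)\|\nabla\cdot\|$. In two dimensions, Ladyzhenskaya's inequality gives $\|\bu\|_{L^4}\le C\|\bu\|^{1/2}\|\nabla\bu\|^{1/2}$ and $\|\bw\|_{L^4}\le C\|\bw\|^{1/2}\|\nabla\bw\|^{1/2}\le C\|\nabla\bw\|$ after Poincaré. In three dimensions, the embedding $\bH_0^1(\Omega)\hookrightarrow L^6(\Omega)$ gives $\|\bw\|_{L^6}\le C\|\nabla\bw\|$, while interpolating $L^3$ between $L^2$ and $L^6$ and invoking that embedding once more yields $\|\bu\|_{L^3}\le\|\bu\|^{1/2}\|\bu\|_{L^6}^{1/2}\le C\|\bu\|^{1/2}\|\nabla\bu\|^{1/2}$. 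In either dimension, multiplying the three estimates together gives $b(\bu,\bv,\bw)\le C(\Omega)\,\|\bu\|^{1/2}\|\nabla\bu\|^{1/2}\,\|\nabla\bv\|\,\|\nabla\bw\|=C(\Omega)\sqrt{\|\bu\|\,\|\nabla\bu\|}\,\|\nabla\bv\|\,\|\nabla\bw\|$, with $C(\Omega)$ absorbing the relevant embedding and Poincaré constants.

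The main obstacle is thus purely the bookkeeping of exponents described above rather than anything conceptual, and since the estimate is entirely classical (it is Lemma 22 in~\cite{layton2008introduction}; see also Lemma 61.1 in~\cite{sell2013dynamics}) I would ultimately just cite it rather than reproduce the computation.
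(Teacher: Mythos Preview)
Your proof sketch is correct and is precisely the classical argument behind this estimate. The paper itself does not prove the lemma at all: it simply states it and cites Lemma~22 in~\cite{layton2008introduction} and Lemma~61.1 in~\cite{sell2013dynamics}, which is exactly what you propose doing in your final sentence, so your approach is fully aligned with the paper's.
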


\begin{theorem} \label{theorem:main-theorem}
The DD-VMS-ROM~\eqref{eqn:r-dimensional-dd-vms-rom-full-disc} with linear ansatz \eqref{eqn:ansatz}, physical constraint \eqref{eqn:physical-constraint}, and the initial condition $\bu_r^0=P_r(\bu_d^0)$
is verifiable: 
For a small enough time step, $\Delta t \, d_j < 1, \, \, \forall \, j= 1,...,M$, where $d_j =\Big( {\frac{27 (Re)^3 C(\Omega)^4}{16}} ||\nabla P_r( \boldsymbol u_d^j)||^4 +Re \Big)$ and 
$C(\Omega)$
is the constant in Lemma~\ref{lemma:trilinear-bound}, 
the following inequality holds 
for all 
$ n = 1, \ldots, M$: 
\begin{eqnarray} \label{eqn:main-theorem-parta}
\begin{aligned}
||\boldsymbol e^{n}||^2 + \Delta t \sum_{j=1}^{n} Re^{-1} || \nabla \boldsymbol e^{j}||^2 \leq \\
\exp \Big(\Delta t \sum_{j=1}^{n} \frac{d_j}{1-\Delta t d_j} \Big) \Big(\Delta t  \sum_{j=1}^{n} Re^{-1} 
|| P_r(\boldsymbol \tau^{FOM} (\boldsymbol u_d^j )- & \boldsymbol \tau^{ROM} (P_r(\bu_d^j))) \, ||^2\Big), \\ 
\end{aligned}
\end{eqnarray}
{where $\boldsymbol e^n = P_r(\bu_d^n) - \boldsymbol u_r^n$.}
\end{theorem}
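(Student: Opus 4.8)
The plan is to run a standard discrete energy estimate for the error $\be^n = P_r(\bu_d^n) - \bu_r^n$, exploiting the mean dissipativity of Theorem~\ref{theorem:mean-dissipativity} to discard the modeled-closure contribution and Lemma~\ref{lemma:trilinear-bound} to control the convective nonlinearity. First I would subtract the full discretization of the DD-VMS-ROM~\eqref{eqn:r-dimensional-dd-vms-rom-full-disc} from the filtered truth equation~\eqref{eqn:filtered-weak-form-full-disc}, obtain an error equation, and test it with $\bvr = \be^n$. The discrete time-derivative term is handled by the polarization identity $(\be^n - \be^{n-1}, \be^n) \ge \tfrac12(\|\be^n\|^2 - \|\be^{n-1}\|^2)$, the viscous term gives $Re^{-1}\|\nabla\be^n\|^2$, and the prescribed initial condition $\bu_r^0 = P_r(\bu_d^0)$ gives $\be^0 = 0$.

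Next I would split the closure contribution $(\btau^{FOM}(\bu_d^n) - \btau^{ROM}(\bu_r^n), \be^n)$ by inserting $\btau^{ROM}(P_r(\bu_d^n))$. The piece $(\btau^{ROM}(P_r(\bu_d^n)) - \btau^{ROM}(\bu_r^n), \be^n)$ equals $(\btau^{ROM}(P_r(\bu_d^n)) - \btau^{ROM}(\bu_r^n), P_r(\bu_d^n) - \bu_r^n)$, which is nonnegative by mean dissipativity (Definition~\ref{definition:mean-dissipativity}, established for the DD-VMS-ROM in Theorem~\ref{theorem:mean-dissipativity}), hence it may be moved to and dropped from the left-hand side. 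The remaining consistency error $(\btau^{FOM}(\bu_d^n) - \btau^{ROM}(P_r(\bu_d^n)), \be^n)$ equals $(P_r(\btau^{FOM}(\bu_d^n) - \btau^{ROM}(P_r(\bu_d^n))), \be^n)$ since $\be^n \in \bX^r$; Cauchy--Schwarz and a weighted Young inequality then bound it by $\tfrac12 Re^{-1}\|P_r(\btau^{FOM}(\bu_d^n) - \btau^{ROM}(P_r(\bu_d^n)))\|^2$ plus a $\|\be^n\|^2$ term whose coefficient $\tfrac12 Re$ will later be absorbed into the Gronwall coefficient (supplying the ``$+Re$'' in $d_n$).

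For the convective term, writing $\bu_r^n = P_r(\bu_d^n) - \be^n$ and expanding the trilinear form, the skew-symmetry $b(\cdot,\bw,\bw) = 0$ on $\bX^r$ annihilates $b(P_r(\bu_d^n),\be^n,\be^n)$ and $b(\be^n,\be^n,\be^n)$, leaving only $b(\be^n, P_r(\bu_d^n), \be^n)$. Lemma~\ref{lemma:trilinear-bound} bounds this by $C(\Omega)\|\be^n\|^{1/2}\|\nabla\be^n\|^{3/2}\|\nabla P_r(\bu_d^n)\|$, and a weighted Young inequality with conjugate exponents $4/3$ and $4$ splits it into a multiple of $\|\nabla\be^n\|^2$ (to be absorbed into the viscous term) and a $\|\be^n\|^2$ term whose coefficient, after balancing the absorbed viscous fraction, is precisely the first summand $\tfrac{27\,Re^3 C(\Omega)^4}{16}\|\nabla P_r(\bu_d^n)\|^4$ of $d_n$. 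So that a full $\Delta t\,Re^{-1}\|\nabla\be^n\|^2$ survives on the left as in~\eqref{eqn:main-theorem-parta}, I would carry out all these bounds starting from $2\Delta t$ times the per-step identity.

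Collecting the estimates gives the per-step inequality $\|\be^n\|^2 - \|\be^{n-1}\|^2 + \Delta t\,Re^{-1}\|\nabla\be^n\|^2 \le \Delta t\, d_n\|\be^n\|^2 + \Delta t\,Re^{-1}\|P_r(\btau^{FOM}(\bu_d^n) - \btau^{ROM}(P_r(\bu_d^n)))\|^2$; summing over $n$ and telescoping with $\be^0 = 0$ puts it in the hypothesis form of a discrete Gronwall lemma, which under the smallness assumption $\Delta t\, d_j < 1$ delivers~\eqref{eqn:main-theorem-parta}. I expect the main obstacle to be the sharp tracking of constants rather than any conceptual difficulty: the Young weight for the cubic convective term must be chosen so that, after deducting the absorbed viscous fraction, the residual coefficient is exactly $\tfrac{27\,Re^3 C(\Omega)^4}{16}\|\nabla P_r(\bu_d^n)\|^4$, and the skew-symmetry cancellations must be justified for the (discretely divergence-free) ROM velocities; the polarization identity, the Cauchy--Schwarz/Young estimates of the linear terms, and the discrete Gronwall step are routine.
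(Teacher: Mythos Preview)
Your proposal is correct and follows essentially the same route as the paper's proof: subtract~\eqref{eqn:r-dimensional-dd-vms-rom-full-disc} from~\eqref{eqn:filtered-weak-form-full-disc}, test with $\be^j$, use mean dissipativity (Theorem~\ref{theorem:mean-dissipativity}) to drop the modeled-closure difference, reduce the convective contribution to $b(\be^j,P_r(\bu_d^j),\be^j)$ via skew-symmetry, bound it with Lemma~\ref{lemma:trilinear-bound} and a $4/3$--$4$ Young split tuned to the constants in $d_j$, and finish with the discrete Gronwall lemma. The only cosmetic differences are that the paper reaches the single convective term by adding and subtracting $b(\bu_r^j,P_r(\bu_d^j),\be^j)$ (using $b(\bu_r^j,\be^j,\be^j)=0$) rather than your direct expansion, and handles $(\be^j-\be^{j-1},\be^j)$ via H\"older--Young with $C_1=1$ rather than invoking the polarization identity; both choices yield the same inequalities and constants.
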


\begin{proof}
We subtract \eqref{eqn:r-dimensional-dd-vms-rom-full-disc} from \eqref{eqn:filtered-weak-form-full-disc}, and replace $n$ with $j$ to get the error equation:
\begin{eqnarray} \label{eqn:verifiability-theorem-proof-part1}
\begin{aligned}
(\frac{\boldsymbol e^{j}-\boldsymbol e^{j-1}}{\Delta t}, \boldsymbol v_r ) &+ Re^{-1} (\nabla \boldsymbol e^{j}, \nabla \boldsymbol v_r  ) + b(P_r(\bu_d^{j}),P_r(\bu_d^{j}), \boldsymbol v_r ) - b(\boldsymbol u_r^{j},\boldsymbol u_r^{j},\boldsymbol v_r )   \\
&  + 
\big( \boldsymbol \tau^{ROM} (P_r(\bu_d^{j})) - \boldsymbol \tau^{ROM} (\boldsymbol u_r^{j}) ,   \boldsymbol v_r   \big) 
= 
- \big( \boldsymbol \tau^{FOM} (\boldsymbol u_d^{j} )- \boldsymbol \tau^{ROM} (P_r(\bu_d^{j})),  \boldsymbol v_r  \big)
. \\
\end{aligned}
\end{eqnarray} 
We set $ \boldsymbol v_r  = \boldsymbol e^{j}$
in \eqref{eqn:verifiability-theorem-proof-part1}, 
add and subtract $b(\bu_r^{j} , P_r(\bu_d^{j}), \boldsymbol e^{j} )$, and
use the fact that $b(\boldsymbol u_r^{j},\boldsymbol e^{j}, \boldsymbol e^{j} ) = 0$ 
to get the following equation:
\begin{eqnarray} \label{eqn:verifiability-theorem-proof-part2}
\begin{aligned}
\Delta t^{-1} ( \boldsymbol e^{j} - \boldsymbol e^{j-1}, \boldsymbol e^{j} ) &+ Re^{-1} || \nabla \boldsymbol e^{j} ||^2 +  b(\boldsymbol e^{j}, P_r(\bu_d^{j}),\boldsymbol e^{j})  \\
 & 
  + ( \boldsymbol \tau^{ROM} (P_r(\bu_d^{j})) - \boldsymbol \tau^{ROM} (\boldsymbol u_r^{j}) ,  \boldsymbol e^{j} )  
 = 
 - ( \boldsymbol \tau^{FOM} (\boldsymbol u_d^{j}) - \boldsymbol \tau^{ROM} (P_r(\bu_d^{j})), \boldsymbol e^{j}).
\end{aligned}
\end{eqnarray}
From Theorem~\ref{theorem:mean-dissipativity}, we have the following inequality:
\begin{eqnarray} \label{eqn:verifiability-theorem-proof-part3}
 ( \boldsymbol \tau^{ROM} (P_r(\bu_d^{j})) - \boldsymbol \tau^{ROM} (\boldsymbol u_r^{j}) ,   \boldsymbol e^{j}  ) \geq 0.
\end{eqnarray}
By applying \eqref{eqn:verifiability-theorem-proof-part3} to \eqref{eqn:verifiability-theorem-proof-part2}, we get the following inequality:
\begin{eqnarray} \label{eqn:verifiability-theorem-proof-part4}
\begin{aligned}
  \Delta t^{-1}\big(\boldsymbol e^{j}- \boldsymbol e^{j-1},  \boldsymbol e^{j}  \big) 
  & + Re^{-1} || \nabla \boldsymbol e^{j} ||^2
   \leq - b(\boldsymbol e^{j} ,P_r(\bu_d^{j}), \boldsymbol e^{j}  ) 
 - \big( \boldsymbol \tau^{FOM} (\boldsymbol u_d^{j}) - \boldsymbol \tau^{ROM} (P_r(\bu_d^{j})),  \boldsymbol e^{j}  \big).
    \end{aligned}
\end{eqnarray} 

Applying Hölder's and Young's inequalities to the  terms $( \boldsymbol e^{j} - \boldsymbol e^{j-1}, \boldsymbol e^{j} )$ and $-( \boldsymbol \tau^{FOM} (\boldsymbol u_d^{j}) - \boldsymbol \tau^{ROM} (P_r(\bu_d^{j})), \boldsymbol e^{j})$ in \eqref{eqn:verifiability-theorem-proof-part4} we obtain that, for any $C_1, C_2 > 0$, the following inequalities hold:
\begin{eqnarray} \label{eqn:verifiability-theorem-proof-part5}
\begin{aligned}
(\boldsymbol e^{j} - \boldsymbol e^{j-1} , \boldsymbol e^{j}) & =  ||\boldsymbol e^{j}||^2 - (\boldsymbol e^{j}, \boldsymbol e^{j-1}) \\
& \geq  ||\boldsymbol e^{j}||^2 - ||\boldsymbol e^{j} || \, || \boldsymbol e^{j-1}||  \\
& \geq  ||\boldsymbol e^{j}||^2 - \frac{C_1}{2} ||\boldsymbol e^{j}||^2 - \frac{1}{2C_1} ||\boldsymbol e^{j-1}||^2 
\end{aligned}
\end{eqnarray}

and 

\begin{eqnarray} \label{eqn:verifiability-theorem-proof-part7}
\begin{aligned}
|-(\boldsymbol \tau^{FOM} (\boldsymbol u_d^{j})  - \boldsymbol \tau^{ROM} (P_r(\bu_d^{j}))  ,  \boldsymbol e^{j} ) |
&= |-(P_r(\boldsymbol \tau^{FOM} (\boldsymbol u_d^{j})  - \boldsymbol \tau^{ROM} (P_r(\bu_d^{j})))  ,  \boldsymbol e^{j} )|  \\
&\leq \frac{1}{2C_2} || P_r(\boldsymbol \tau^{FOM} (\boldsymbol u_d^{j} )- \boldsymbol \tau^{ROM} (P_r(\bu_d^{j}) )) \,||^2  + \frac{C_2}{2} || \boldsymbol e^{j} ||^2.
\end{aligned}
\end{eqnarray}

  Applying Lemma~\ref{lemma:trilinear-bound} 
  to the  term $-b(\boldsymbol e^{j}, P_r(\bu_d^{j}),\boldsymbol e^{j})$,  
 we obtain the following inequality for any $C_3 > 0$:
\begin{eqnarray} \label{eqn:verifiability-theorem-proof-part6}
\begin{aligned}
|-b(\boldsymbol e^{j}, P_r(\bu_d^{j}), \boldsymbol e^{j})| 
& \leq  
C(\Omega) \, ||\nabla \boldsymbol e^{j} ||^{3/2} \, ||\nabla P_r( \boldsymbol u_d^{j}) || \, ||\boldsymbol e^{j} ||^{1/2} \\
& \leq \frac{3 C_3 C(\Omega)}{4} || \nabla \boldsymbol e^{j} ||^2 + \frac{C(\Omega)}{4{(C_3)^3}} ||\nabla P_r( \boldsymbol u_d^{j}) ||^4 || \boldsymbol e^{j} ||^2,
 \end{aligned}
\end{eqnarray}
where $C(\Omega)$ is the constant in Lemma~\ref{lemma:trilinear-bound}.

By choosing $C_1 = 1$, $C_2=Re$, and $C_3=2Re^{-1}/3 C(\Omega)$, we get the following inequality:
\begin{eqnarray} \label{eqn:verifiability-theorem-proof-part8}
\begin{aligned}
 &\frac{1}{2 \Delta t} (||\boldsymbol e^{j} ||^2 - ||\boldsymbol e^{j-1}||^2 )  + \frac{Re^{-1} }{2} || \nabla \boldsymbol e^{j}||^2  \\
 &\leq \Big( {\frac{27 (Re)^3 C(\Omega)^4}{32}} || \nabla P_r( \boldsymbol u_d^{j})||^4 + \frac{Re}{2} \Big) ||\boldsymbol e^{j} ||^2 +  \frac{Re^{-1}}{2} 
|| P_r( \boldsymbol \tau^{FOM} (\boldsymbol u_d^{j} )- \boldsymbol \tau^{ROM} (P_r(\bu_d^{j}))) \,||^2.
\end{aligned}
\end{eqnarray}

By multiplying \eqref{eqn:verifiability-theorem-proof-part8} by $2 \Delta t$ and summing the resulting inequalities from $j=1$ to 
$n$, we obtain the following inequality:
\begin{eqnarray} \label{eqn:verifiability-theorem-proof-part10}
\begin{aligned}
||\boldsymbol e^{n}||^2 + \Delta t \sum_{j=1}^{n}  Re^{-1} || \nabla \boldsymbol e^{j}||^2  \leq ||\boldsymbol e^0||^2  + \Delta t  \sum_{j=1}^{n} \Big( {\frac{27 (Re)^3 C(\Omega)^4}{16}} || \nabla P_r( \boldsymbol u_d^{j})||^4 + Re \Big) ||\boldsymbol e^{j} ||^2 \\
+ \Delta t  \sum_{j=1}^{n} Re^{-1} 
|| P_r(\boldsymbol \tau^{FOM} (\boldsymbol u_d^{j} )- \boldsymbol \tau^{ROM} (P_r(\bu_d^{j}))) \, ||^2.
\end{aligned}
\end{eqnarray}

To apply the discrete Gronwall's lemma, we first make  the following notation:
\begin{eqnarray} 
\begin{aligned} \label{eqn:discrete-gronwall-equivalence-notation}
a_{j}&:= ||\boldsymbol e^{j}||^2 \geq 0 ,   \\
b_{j}&:=  Re^{-1} || \nabla \boldsymbol e^{j}||^2  \geq 0, \\
d_{j}&:= \Big({\frac{27 (Re)^3 C(\Omega)^4}{16}} ||\nabla P_r( \boldsymbol u_d^{j})||^4 +Re \Big) \geq 0, \\
c_{j}&:= Re^{-1} || P_r( \boldsymbol \tau^{FOM} (\boldsymbol u_d^{j} )- \boldsymbol \tau^{ROM} (P_r(\bu_d^{j})) ) \, ||^2 \geq 0, \\
H&:= ||\boldsymbol e^0||^2 \geq 0. 
\end{aligned}
\end{eqnarray}
We also recall that, by the small time step assumption, the following inequality holds:
$\Delta t \, d_{j} < 1, \,  \forall j$.
By using the notation in~\eqref{eqn:discrete-gronwall-equivalence-notation}, 
we rewrite \eqref{eqn:verifiability-theorem-proof-part10} as follows:
\begin{eqnarray} \label{eqn:limit-consistency-theorem-proof-part9}
a_{n} + \Delta t \sum_{j=1}^{n} b_{j} \leq \Delta t \sum_{j=1}^{n} d_{j} \, a_{j} + \Delta t \sum_{j=1}^{n} c_{j} + H. 
\end{eqnarray}

By using the discrete Gronwall's lemma (see Lemma 27 in~\cite{layton2008introduction}) in~\eqref{eqn:limit-consistency-theorem-proof-part9}, we obtain the following inequality: 
\begin{eqnarray} \label{eqn:limit-consistency-theorem-proof-part10}
a_{n} + \Delta t \sum_{j=1}^{n} b_{j} \leq \exp \Big(\Delta t \sum_{j=1}^{n} \frac{d_{j}}{1-\Delta t d_{j}} \Big) \Big(\Delta t \sum_{j=1}^{n} c_{j} + H \Big) . 
\end{eqnarray}

We note that choosing the initial condition $\bu_r^0=P_r(\bu_d^0)$, implies that
$\boldsymbol e^0 = \bu_r^0-P_r(\bu_d^0) = 0$, and thus $H = 0$.  
As a result, \eqref{eqn:limit-consistency-theorem-proof-part10} implies that \eqref{eqn:main-theorem-parta} holds.

\end{proof}

\begin{remark}
We note that the small time step assumption that we made in the theorem, i.e., that $\Delta t \, d_{j} < 1 \, \, \forall j = 1,...,M$, is also made in a FE context (see Lemma 27 and the proof of Theorem 24 in \cite{layton2008introduction}).
\end{remark}

\begin{remark}
In this paper, we used backward Euler time discretization to obtain the full discretizations of the ROMs. 
However, other time discretization schemes could be applied as well.
\end{remark}

\section{Numerical Results}
\label{sec:num}

In Theorem~\ref{theorem:main-theorem}, we proved that the DD-VMS-ROM presented in Section~\ref{sec:dd-vms-rom} is verifiable.
In this section, we present numerical support for the theoretical results in Theorem~\ref{theorem:main-theorem}.
In Section~\ref{sec:numerical-implementation}, we provide details on the numerical implementation of the DD-VMS-ROM. 
We numerically show that the DD-VMS-ROM is verifiable for the Burgers equation in Section~\ref{sec:numerical-results-burgers} and for the flow past a cylinder in Section~\ref{sec:numerical-results-nse}.

\subsection{Numerical Implementation}
\label{sec:numerical-implementation}

\paragraph{``Truth" Solution}
For computational efficiency, instead of solving the very large-dimensional G-ROM~\eqref{eqn:d-dimensional-g-rom} to get the ``truth" solution, $\boldsymbol u_d$, we simply project the FOM data on the ROM space, i.e., $\boldsymbol u_d = P_r (\bu_h), \, r=d$.
In our numerical investigation, the two approaches yield similar results (i.e., the difference between the two approaches is on the order of the time discretization error).
Thus, using the projection of the FOM data as ``truth" solution does not affect our numerical investigation of the DD-VMS-ROM's verifiability.

\paragraph{{Least Squares Regularization: Truncated SVD}}
As is often the case in data-driven modeling~\cite{peherstorfer2016data}, the least squares problem~\eqref{eqn:least-squares} that we need to solve in order to determine the entries in the ROM closure operator $\widetilde{A}$ used to construct the DD-VMS-ROM~\eqref{eqn:dd-vms-rom} 
{can be} 
ill conditioned.
To alleviate the ill conditioning of the least squares problem, we proposed the use of the truncated SVD~\cite{xie2018data,mou2021data}
{as a regularization method~\cite[Chapter~4]{hansen2010discrete}}
(see also~\cite{yildiz2020data} for a related approach).
For completeness, in Algorithm~\ref{alg:truncated-SVD}, we outline the construction of the DD-VMS-ROM with the truncated SVD procedure.

\begin{algorithm}[htb]
	\caption{
	{Least Squares Regularization: Truncated SVD}
	}
	\label{alg:truncated-SVD}
	\begin{algorithmic}[1]
		\STATE{
		Formulate the standard linear least squares problem for the unknown vector ${\bx_u}$: 	
  \begin{align}
  \label{eqn:linear-ls-1}
	    \min_{\bx_u} \bigl\|E{\bx_u}-\boldsymbol f\bigr\|^2, 
	\end{align}
	where $E\in\mathbb{R}^{Mr\times r^2}$ is a matrix whose entries are determined by 
	$\boldsymbol a_d(t_j),j=1,\cdots, M$, $\boldsymbol f\in\mathbb{R}^{Mr\times 1}$ is a vector whose entries are determined by $P_r(\btau^{FOM}(t_j)$), and  ${\bx_u}\in\mathbb{R}^{r^2\times 1},j=1,\cdots, M$,  is a vector whose entries are determined by $ \widetilde{\boldsymbol A}$. 
					}
		\STATE{
		 Calculate the SVD of $E$: 
\begin{align}
E= U\Sigma V^\top.
\end{align}
					}
		\STATE{
		 Specify a tolerence $tol$.
					}
		\STATE{
Keep the entries in $\Sigma$ that are larger than $tol$; the resulting matrix is $\widetilde{\Sigma}$ ($\widetilde{\sigma}=\sigma$ if $\sigma>tol$; 
the singular values of $E$ can be chosen as $tol$ values).
					}
		\STATE{
Construct $\widetilde{E}$, the truncated SVD of $E$: 
\begin{align} \label{Eq_Etilde}
\widetilde{E} = \widetilde{U}\widetilde{\Sigma}\widetilde{V}^\top,
\end{align}
where $\widetilde{U}$ and $\widetilde{V}$ are the entries of $U$ and $V$ that correspond to $\widetilde{\Sigma}$, respectively.
					}					
		\STATE{
The solution is given by 
\begin{align} \label{Eq_param_without_constraint}
{\bx_u}= \left(\widetilde{V}\widetilde{\Sigma}^{-1}\widetilde{U}^\top\right){\boldsymbol f}.
\end{align}		
}
\end{algorithmic}
\end{algorithm}

The tolerance $tol$ specified in step 3 of Algorithm~\ref{alg:truncated-SVD} 
{(which yields the truncation parameter $k$, i.e., the index of the lowest singular value retained in the matrix $\widetilde{\Sigma}$ constructed in step 4 of Algorithm~\ref{alg:truncated-SVD}; see equations (4.2) and (4.3) in ~\cite[Chapter~4]{hansen2010discrete})}
plays an important role in the numerical implementation of the DD-VMS-ROM.
Specifying a large $tol$ value yields a well conditioned least squares problem in step 1 and, as a result, minimizes the numerical errors in the least squares problem.
However, a large $tol$ value also decreases the accuracy of the least squares problem, i.e., yields a DD-VMS-ROM closure operator $\tA$ that does not accurately match the FOM data.
On the other hand, choosing a small $tol$ value does not significantly decrease the accuracy of the DD-VMS-ROM closure operator $\tA$, but does not significantly alleviate the ill conditioning of the least squares problem either.
In our numerical investigation, a careful choice of the tolerance $tol$ yields optimal DD-VMS-ROM results.

{If physical constraints such as that given by \eqref{Eq_phys_constraint} are added when solving the minimization problem \eqref{eqn:linear-ls-1}, then the optimal $\widetilde{A}$ given by \eqref{Eq_param_without_constraint} associated with a specified $tol$  should be replaced by the solution 
of a constrained linear least squares solver with $\widetilde{E}$ given by \eqref{Eq_Etilde} as the data matrix. For all the numerical results presented 
in Section~\ref{sec:numerical-results-burgers} and Section~\ref{sec:numerical-results-nse}, we use the Matlab built-in solver \texttt{lsqlin} for this purpose. 
Specifically, we use the interior-point algorithm option for \texttt{lsqlin} with $\texttt{ConstraintTolerance} = 1\text{E-}10$, $\texttt{OptimalityTolerance} = 1\text{E-}9$, $\texttt{StepTolerance} = 1\text{E-}12$, and $\texttt{MaxIter} = 1000$.}

\paragraph{Time Discretization}
Although the DD-VMS-ROM's verifiability was proven in Theorem~\ref{theorem:main-theorem} for the backward Euler time discretization, in the numerical investigation 
{of the flow past a cylinder (Section~\ref{sec:numerical-results-nse}), we use the linearized BDF2 time discretization.}
We use 
this higher-order time discretizations in order to decrease the impact of the time discretization error onto the LES-ROM error, which is the main focus of the numerical investigation in this section.
Furthermore, we believe that the mathematical arguments used to prove the DD-VMS-ROM's verifiability in Theorem~\ref{theorem:main-theorem} can be extended to higher-order time discretizations such as 
that considered in 
{Section~\ref{sec:numerical-results-nse}.}

\paragraph{Criteria}
To illustrate numerically the DD-VMS-ROM verifiability proven in Theorem~\ref{theorem:main-theorem}, we use the following approach, {which was outlined in Section~\ref{sec:les-rom}
{(see, e.g., the discussion after Definition~\ref{def:truth-solution})}
and Section~\ref{sec:dd-vms-rom-verifiability} (see, e.g., Definitions~\ref{def:constant} and~\ref{definition:DD-VMS-ROM-verifiability})}:
First, we fix the number of snapshots, $M$.
Therefore, the maximal dimension of the ROM space, $d$, is also fixed.
Furthermore, the ``truth" solution $\bu_d$ (i.e., the solution of the $d$-dimensional G-ROM~\eqref{eqn:d-dimensional-g-rom}) is also fixed.
The goal of our numerical investigation is to show that, for fixed $M, d$, and $\bu_d$, there exists a constant $C$ {(see Definition~\ref{def:constant})} such that for varying $r$ values and for varying $tol$ values, the inequality~\eqref{eqn:main-theorem-parta} is satisfied.
{
Thus, the goal is to bound the error between the DD-VMS-ROM solution, $\bu_r$, and the large scale component of the ``truth'' solution, $P_r(\bu_d)$.
}

To this end, we use the following metrics:
To quantify the LES-ROM error, i.e., the {averaged error associated with the first} term on the LHS of inequality~\eqref{eqn:main-theorem-parta} {(see also the LHS of \eqref{eqn:DD-VMS-ROM-verifiability})}, we use the following 
average $L^2$ norm: 
\begin{align}
 \mathcal{E} (L^2)
 = \frac{1}{M}\sum_{n=1}^M\, 
\| P_r(\bu_d^n) - \bu_r^n \|^2
= \frac{1}{M}\sum_{n=1}^M \, \| \be^n \|^2 \, .
\label{eqn:l2-error-v}
\end{align}
To quantify the LES-ROM closure error, i.e., the term on the RHS of inequality~\eqref{eqn:main-theorem-parta}, we use the following metric:
\begin{align}
 \eta(L^2) = \frac{1}{M}\sum_{n=1}^M \left\| P_r( \btau^{FOM}(\bu_d^n) - \btau^{ROM}(P_r(\bu_d^n) \, ) \, ) \right\|^2_{L^2}.
 \label{eq:tau-cal-v}
\end{align}

{
Note that the ROM error $\mathcal{E} (L^2)$ and the closure error $\eta(L^2)$ depend on both the dimension $r$ of the LES-ROM and the aforementioned tolerance index (i.e., truncation parameter) $k$ associated with the tolerance of the truncated SVD used for constructing $\widetilde{A}$ for the given $r$. We suppressed these dependencies to simplify the notation. It should be clear from the context which parameter is varied for each of the numerical results presented below.}

\subsection{Assessment of Results}
    \label{sec:assessment-results}

To illustrate numerically the DD-VMS-ROM verifiability proven in Theorem~\ref{theorem:main-theorem}, we need to show that 
as $\eta(L^2)$ in~\eqref{eq:tau-cal-v} decreases, so does $\mathcal{E} (L^2)$ in~\eqref{eqn:l2-error-v}. {
Specifically, according to \eqref{eqn:main-theorem-parta} (see also Definition~\ref{definition:DD-VMS-ROM-verifiability}), we should see $\log(\eta(L^2))$ and $\log(\mathcal{E} (L^2))$ obey the following relation:
\begin{equation} \label{Eq_scaling_law}
\log(\mathcal{E} (L^2)) \le \alpha \log(\eta(L^2)) + \beta,
\end{equation}
with $\alpha = 1$ and some $\beta > 0$. As pointed out above, 
both $\mathcal{E} (L^2)$ and $\eta(L^2)$ depend on two parameters: 
the ROM dimension $r$ and the tolerance index $k$ in the truncated SVD. In the 
numerical investigation, we 
perform two types of experiments: 
\begin{itemize}
\item[(i)] For a fixed $r$, we aim to show that \eqref{Eq_scaling_law} holds with $\alpha \ge 1$ as $k$ is varied; 
\item[(ii)] For each $r$, we pick 
the corresponding $k$ that minimizes $\mathcal{E} (L^2)$, and 
aim to show that \eqref{Eq_scaling_law} holds with $\alpha \ge 1$ as $r$ is varied. 
\end{itemize}
Since in practice one is interested in the settings 
for which $\eta(L^2)$ is relatively small, 
a rate $\alpha>1$ indicates a better rate than the rate predicted by 
Theorem~\ref{theorem:main-theorem}.} 

We {would 
like to} note that our numerical investigation is somewhat different from the standard investigations used in the numerical analysis literature. {While increasing the ROM dimension $r$ is analogous to reducing the mesh size $h$ in numerical analysis, the tolerance 
index $k$ for the truncated SVD (which is tied specifically to the data-driven aspect of the LES-ROM closure examined here) has no analogue in classical numerical analysis.}

\subsection{Burgers Equation}
	\label{sec:numerical-results-burgers}

In this section, we investigate the DD-VMS-ROM verifiability in the numerical simulation of the one-dimensional viscous Burgers equation: 
\begin{equation}
\begin{cases}
\displaystyle~~	u_t -\nu u_{xx} + u u_x = 0~,~~~x \in (0,1),~t\in(0,1], \\
\displaystyle~~	u(0,t)= u(1,t) = 0~,~~~t \in (0,1],  \\
\displaystyle~~ u(x,0)=u_0(x) ~, ~~~~ x \in [0,1],
\end{cases}
\label{eqn:burgers}
\end{equation}
with non-smooth initial condition \eqref{eqn:non-smooth-ic}:
\begin{equation}
u_0(x)=\begin{cases}
\displaystyle~ 1, & x \in (0,1/2],\\
~\displaystyle 0, & x \in (1/2,1]. 
\end{cases}
	\label{eqn:non-smooth-ic}
\end{equation}
This test problem has been used in, e.g.,~\cite{ahmed2018stabilized,KV01,xie2018data}.

\paragraph{Snapshot Generation}
We generate the FOM results by using a linear finite element (FE) spatial discretization with mesh size $h=1/2048$, a {backward Euler} time discretization with timestep size {$\Delta t=5\times 10^{-4}$}, and a viscosity coefficient 
{$\nu=10^{-2}$.}
{Due to the parabolic nature of the Burgers equation \eqref{eqn:burgers}, the discontinuity in the initial data \eqref{eqn:non-smooth-ic} is smoothed out as soon as $t>0$. It becomes a (smooth) viscous shock with relatively steep gradient due to the 
small viscosity used, and 
persists for the whole duration of the time integration, i.e., for $t$ in $[0,1]$. 
See also \cite{chen2022shock,Iliescu_al18}, where 
a stochastic version of 
this type of viscous shocks 
is considered within a 
reduced order modeling context.}

\paragraph{ROM Construction}
We run the FOM from $t = 0$ to $t = 1$, {which 
yields a total of 2001 solution snapshots. Since 
the spatial derivatives of the FOM solution 
are involved in the $\boldsymbol \tau^{FOM}$ part of the closure error $\eta(L^2)$ (see \eqref{eqn:tau}) and the initial condition given by \eqref{eqn:non-smooth-ic} is discontinuous, we remove the FOM solution 
in the time interval $[0, 0.01)$, and thus collect a total of 1981 equally spaced snapshots in the time 
interval $[0.01,1]$ to generate the ROM basis functions.} 
To train the DD-VMS-ROM closure operator $\widetilde{\boldsymbol A}$, 
we use FOM data on the same time interval {$[0.01, 1]$}.
We also test the DD-VMS-ROM on the time interval {$[0.01, 1]$. 
That is, each ROM is initialized at $t=0.01$ using the projected FOM data and run up to $t=1$, and the ROM error $\mathcal{E} (L^2)$ in~\eqref{eqn:l2-error-v} and the closure error $\eta (L^2)$ in \eqref{eq:tau-cal-v} are both computed over the time interval $[0.01, 1]$}.
Thus, we consider the reconstructive regime. {The ROMs are 
integrated with the backward Euler time discretization and 
the same timestep size as that used for the FOM.}

\paragraph{Numerical Results}
{We begin by presenting the results obtained for the first type of experiments 
outlined in Section~\ref{sec:assessment-results}. That is, we fix the ROM dimension $r$, and examine how $\mathcal{E} (L^2)$ in~\eqref{eqn:l2-error-v}, which measures the DD-VMS-ROM error, and $\eta (L^2)$ in \eqref{eq:tau-cal-v}, which measures the DD-VMS-ROM closure error, vary as the tolerance 
index $k$ in the truncated SVD used in the data-driven modeling part is varied. Specifically,} we monitor the decaying rate of $\mathcal{E} (L^2)$ with respect to $\eta (L^2)$ as {$k$ is varied}. The results in {Figure~\ref{fig:Burgers-tSVD_err}, for $r=8, 14$, and $20$}, generally show that, as $\eta(L^2)$ decreases {(red curves)}, so does $\mathcal{E}(L^2)$ {(blue curves)}. {
We note, however, that as shown for $r=8$ and $r=14$ in Figure~\ref{fig:Burgers-tSVD_err}, the global minimum of the ROM error $\mathcal{E} (L^2)$ may not be achieved at $k = r^2$, which corresponds to the case when the full SVD is used for constructing the data-driven closure term $\widetilde{A}$; see the caption of Figure~\ref{fig:Burgers-tSVD_err}. 
We also note that 
larger local fluctuations in both 
curves 
are displayed for $r = 14$ and $r=20$, 
which is due to the fact that the condition number of the data matrix $E^\top E$ increases significantly for 
these two values.
Indeed, the condition number of $E^\top E$ is 
$6.35\times 10^6$ for $r = 20$, $1.2\times 10^5$ for $r=14$, and 
$1.6 \times 10^3$ for $r=8$ 
\footnote{{For the case $r=20$, 
for about $5\%$ of the total $400$ possible $k$ values, 
the constrained linear least squares solver \texttt{lsqlin} fails to converge. 
These $k$ values are scattered around $k=300$. We did not include the corresponding $\mathcal{E} (L^2)$ and $\eta(L^2)$ data in Figure~\ref{fig:Burgers-tSVD_err} and 
we also excluded them when computing the corresponding linear regression slope presented in Figure~\ref{fig:burgers-constrained-linear-lr-verifiability}.}}.}

\begin{figure}[htb!]
    \centering
    \includegraphics[width=\textwidth]{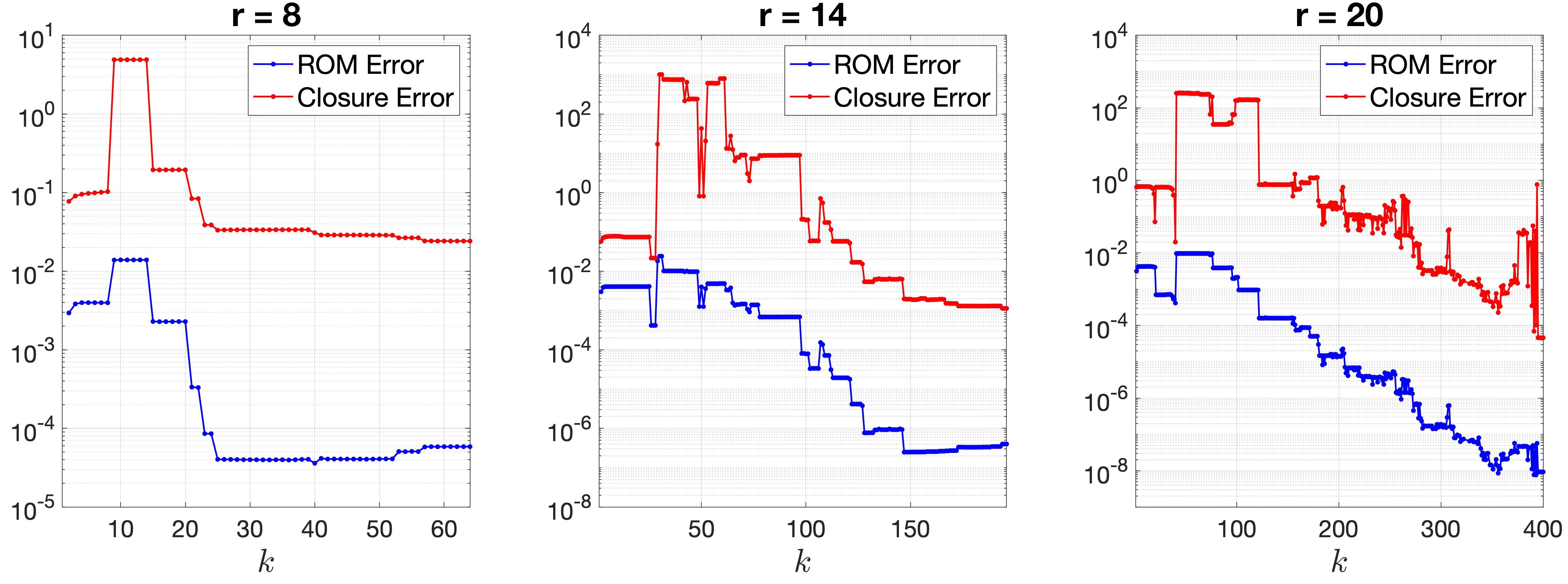}
    \caption{{Burgers equation \eqref{eqn:burgers}, reconstructive regime:
	$\mathcal{E} (L^2)$ and $\eta (L^2)$ for three fixed $r$ values and different tolerance 
	index $k$ values in the truncated SVD. Recall that $\mathcal{E} (L^2)$ and $\eta (L^2)$ are defined by \eqref{eqn:l2-error-v} and \eqref{eq:tau-cal-v}, respectively. 
	As mentioned in Section~\ref{sec:numerical-implementation}, the tolerance values in the truncated SVD 
	take the form of the truncation 
	index $k$, which is the index of the lowest singular value retained in the matrix $\widetilde{\Sigma}$ constructed in step 4 of Algorithm~\ref{alg:truncated-SVD}. For an $r$-dimensional ROM, the matrix $E$ in Algorithm~\ref{alg:truncated-SVD} is of dimension $Mr\times r^2$; cf.~\eqref{eqn:linear-ls-1}. Thus, the tolerance index $k$ can take values between $1$ and $r^2$. As a result, there are $r^2$ data points in each of the three panels for both $\mathcal{E} (L^2)$ (blue curve) and $\eta (L^2)$ (red curve).}
	}
    \label{fig:Burgers-tSVD_err}
\end{figure}

{With the 
$k$-dependence data available, we turn now to 
examining the relation \eqref{Eq_scaling_law} for fixed $r$ values while $k$ is varied. For this purpose, in Figure~\ref{fig:burgers-constrained-linear-lr-verifiability}, we plot the corresponding linear regression (LR) slope. 
We note that the LR slopes shown in Figure~\ref{fig:burgers-constrained-linear-lr-verifiability} are computed based on those $(\mathcal{E} (L^2)$, $\eta (L^2)$) data pairs for which $\eta (L^2) \le 100$ since most of the data pairs are aggregated below that threshold and, more importantly, the cases with small $\eta (L^2)$ are those of practical interest. 
The results in Figure~\ref{fig:burgers-constrained-linear-lr-verifiability} show that \eqref{Eq_scaling_law} holds with $\alpha$ either greater than $1$ or just slightly below $1$. 
}

\begin{figure}[H]
    \centering    
    \includegraphics[width=\textwidth]{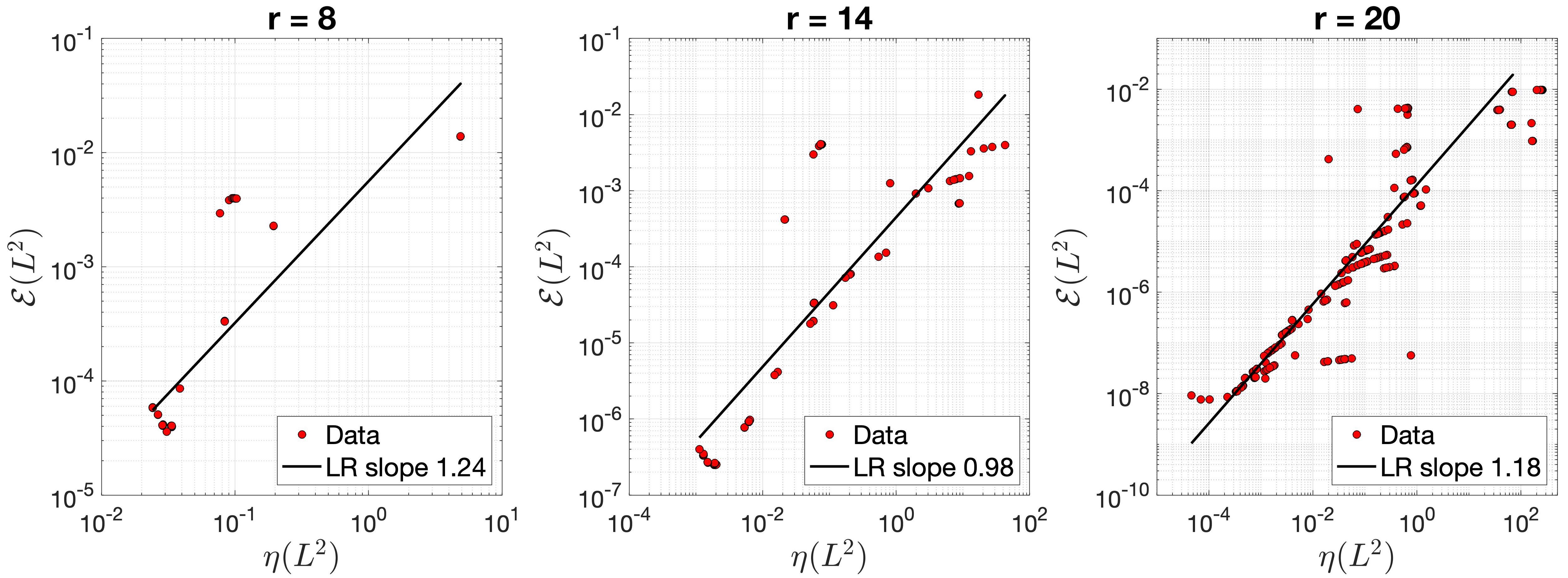}      
    \caption{
	Burgers equation \eqref{eqn:burgers}, reconstructive regime:
	linear regression for $\mathcal{E} (L^2)$ and $\eta (L^2)$ for fixed $r$ values and different tolerance values in the truncated SVD.
	{The 
	red dots in each panel correspond to the data points 
	$(\mathcal{E} (L^2), \eta (L^2))$ shown in the corresponding panel in Figure~\ref{fig:Burgers-tSVD_err}. The linear regression for $\mathcal{E} (L^2)$ in terms of $\eta (L^2)$ in each panel is indicated by the solid black line.}
    }
    \label{fig:burgers-constrained-linear-lr-verifiability}
\end{figure}

{
Next, we consider 
the other type of experiments, in which we vary $r$, and for each $r$ we pick 
the corresponding $k$ that minimizes $\mathcal{E} (L^2)$. These 
results are 
plotted in Figure~\ref{fig:Burgers-err_varying_r}, which shows again that \eqref{Eq_scaling_law} holds with $\alpha \ge 1$, this time 
when $r$ is varied.}
\begin{figure}[H]
    \centering
    \includegraphics[width=\textwidth]{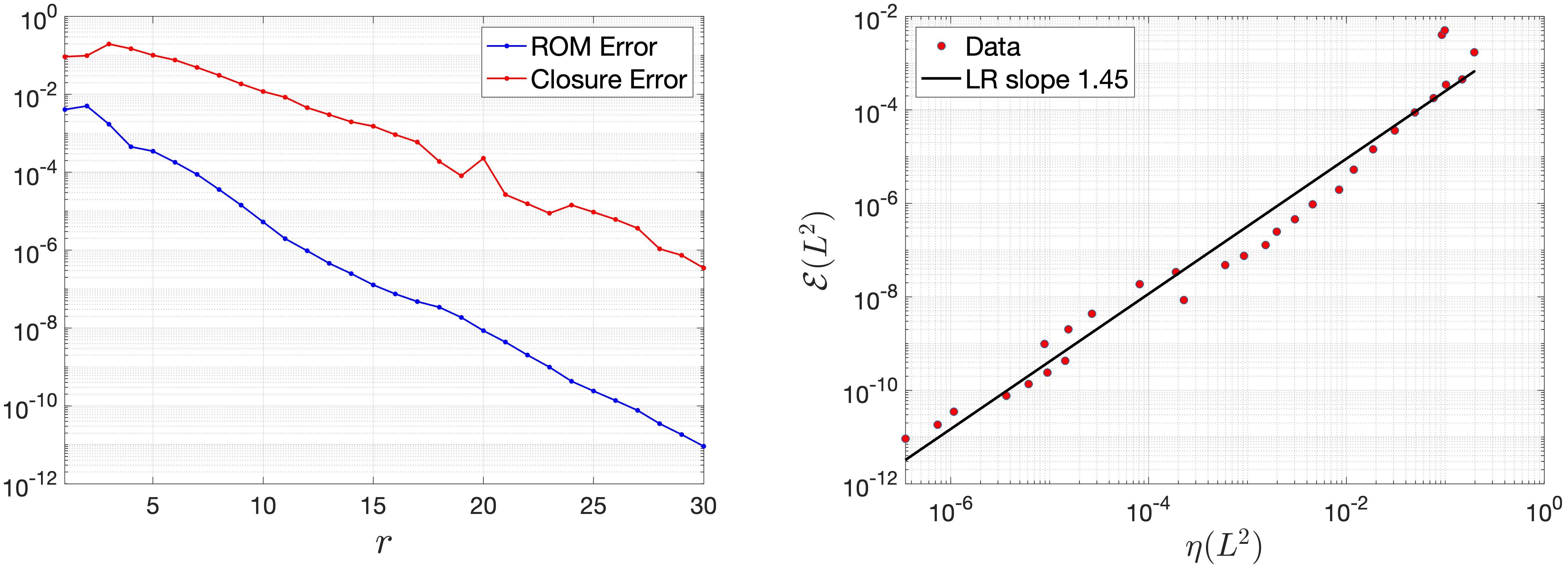}
    \caption{{Burgers equation \eqref{eqn:burgers}, reconstructive regime:
	$\mathcal{E} (L^2)$ and $\eta (L^2)$ as $r$ increases. 
	For each $r$, the tolerance index 
	$k$ in the truncated SVD is chosen 
	to minimize the corresponding ROM error $\mathcal{E} (L^2)$.}
    }
    \label{fig:Burgers-err_varying_r}
\end{figure}

{Overall, the results in this section provide strong numerical support to the theoretical understanding put forth in Theorem~\ref{theorem:main-theorem} in the Burgers equation setting.}

\subsection{Flow Past A Cylinder}
	\label{sec:numerical-results-nse}
	
In this section, we investigate the DD-VMS-ROM verifiability in the numerical simulation of a 2D channel flow past a circular cylinder at Reynolds numbers $Re=100$ and $Re=1000$. 
This test problem has been used in, e.g.,~\cite{mohebujjaman2019physically,mou2021data,xie2018data}.

\paragraph{Computational Setting}
As a mathematical model, we use the NSE~\eqref{eqn:nse-1}--\eqref{eqn:nse-2}.
The computational domain is a $2.2\times 0.41$ rectangular channel with a 
cylinder of radius $0.05$, centered at $(0.2,0.2)$, see Figure~\ref{cyldomain}.  
\begin{figure}[H]
    \centering
    \includegraphics{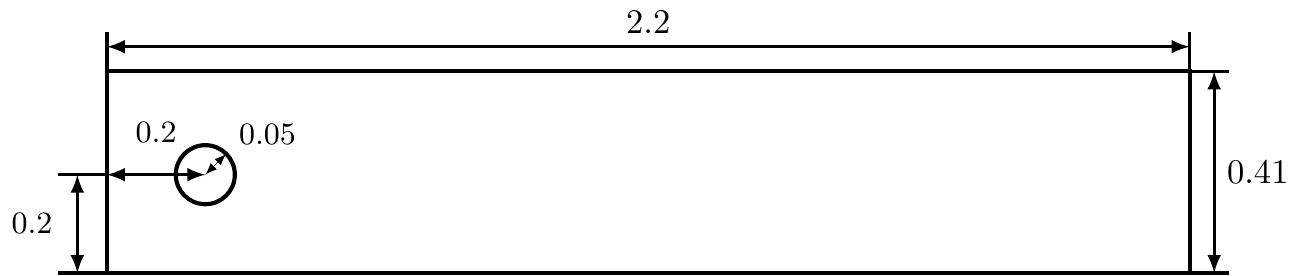}
    
	\caption{\label{cyldomain} 
		Geometry of the flow past a circular cylinder numerical experiment.
		}
\end{figure}

We  prescribe no-slip boundary conditions on the walls and cylinder, and the following inflow and outflow profiles~\cite{john2004reference,mohebujjaman2019physically,rebholz2017improved}:
\begin{align}
u_{1}(0,y,t)&=u_{1}(2.2,y,t)=\frac{6}{0.41^{2}}y(0.41-y), \\ u_{2}(0,y,t)&=u_{2}(2.2,y,t)=0,
\end{align} 
where $\bu=\langle u_1, u_2 \rangle$.  
There is no forcing and the flow starts from rest.

\paragraph{Snapshot Generation}
For the spatial discretization, we use the pointwise divergence-free, LBB stable $(P_2, P_1^{disc})$ Scott-Vogelius finite element pair on a barycenter refined regular triangular mesh~\cite{john2016divergence}. 
The mesh yields $103K$ ($102962$) velocity and $76K$ ($76725$) pressure degrees of freedom. 
We use the 
linearized BDF2 temporal discretization and a time step size $\Delta t=0.002$ for both FOM and ROM time discretizations. 
On the first time step, we use a backward Euler scheme so that we have the two initial time step solutions required for the BDF2 scheme.

\paragraph{ROM Construction}

The FOM simulations 
{settle down to periodic dynamics}
at different time instances for the two Reynolds numbers used in the numerical investigation:
For $Re = 100$ after $t = 5$, 
and for $Re=1000$ after $t=13$.
To construct the ROM basis functions, we use $10$ {time units} of FOM data.
Thus, to ensure a fair comparison of the numerical results at different Reynolds numbers, we collect FOM snapshots on the following time intervals:
For $Re=100$ from $t=7$ to $t=17$, 
and for $Re=1000$ from $t=13$ to $t=23$.

To train the DD-VMS-ROM closure operator $\tA$, we use FOM  data for one period.
The period length of the 
{FOM dynamics}
is different for the two different Reynolds numbers:
From $t = 7$ to $t = 7.332$ for $Re = 100$, 
and from $t = 13$ to $t = 13.268$ for $Re = 1000$. 
Thus, we collect $167$ snapshots for $Re = 100$, and $135$ snapshots for $Re = 1000$.


\subsubsection{Numerical Results for  $Re=100$} 
	\label{sec:numerical-results-nse-re100}

{In Figure~\ref{fig:re100-tSVD_err}}, for three different $r$ values, we {plot} $\mathcal{E} (L^2)$ in~\eqref{eqn:l2-error-v}, which measures the DD-VMS-ROM error, and $\eta (L^2)$ in \eqref{eq:tau-cal-v}, which measures the DD-VMS-ROM closure error. To compute $\mathcal{E} (L^2)$ and $\eta (L^2)$, we fix the $r$ value and decrease the tolerance {
index $k$} in the truncated SVD, which is used in the data-driven modeling part. As the tolerance decreases, we monitor the decaying rate of $\mathcal{E} (L^2)$ with respect to $\eta (L^2)$.  The results {in Figure~\ref{fig:re100-tSVD_err}}, for $r=4, 6$, and $8$, generally show that, as $\eta (L^2)$ decreases, so does $\mathcal{E} (L^2)$. {
We note that, in each panel, the minimal $\mathcal{E} (L^2)$ value is actually achieved at $k = r^2$, 
i.e.,~when the full SVD is used in constructing the closure term $\widetilde{A}$. This is due to the fact that for all the $r$ values  considered, the condition number of the corresponding data matrix $E^\top E$ is always below 
$10^3$.
The same observation is true for the 
$Re=1000$ test case presented in Section~\ref{sec:numerical-results-nse-re1000}. 
}

\begin{figure}[H]
    \centering
    \includegraphics[width=\textwidth]{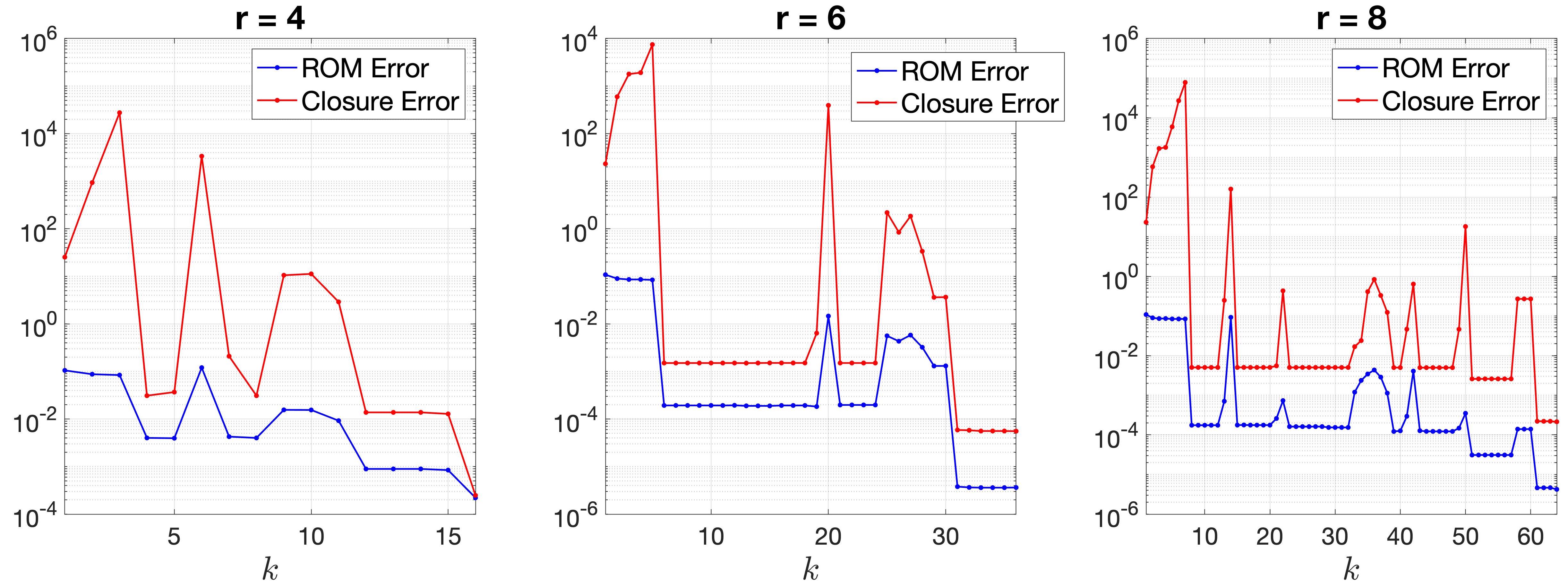}
    \caption{{Flow past a cylinder, $Re=100$, reconstructive regime:	
    $\mathcal{E} (L^2)$ and $\eta (L^2)$ values for fixed $r$ values and different tolerance 
    index $k$ values in the truncated SVD.}
    }
    \label{fig:re100-tSVD_err}
\end{figure}

In Figure~\ref{fig:lr-order-re100-constrained}, for $r=4, 6$, and $8$, we plot the LR slope for $\mathcal{E}(L^2)$ with respect to $\eta(L^2)$.
For $r=4$, the LR slope is $0.56$, for $r=6$ the LR slope is $0.99$, and for $r=8$ the LR slope is $1.03$.
These results indicate an almost linear correlation between $\mathcal{E}(L^2)$ and $\eta(L^2)$, {again in agreement 
with \eqref{Eq_scaling_law} with $\alpha = 1$, except for $r = 4$. One possible explanation for the $r=4$ case 
is that, due to the low-dimensionality of the ROM,  we 
do not have sufficient data points to 
accurately estimate the LR slope. 
}

{When we vary $r$ 
and choose the tolerance index 
$k$ in the truncated SVD to minimize the corresponding $\mathcal{E} (L^2)$, the 
results are shown in Figure~\ref{fig:re100_err_varying_r}. This figure shows again that \eqref{Eq_scaling_law} holds with $\alpha = 1$ in 
the varying $r$ setting.} 

Overall, the results in Figures~\ref{fig:re100-tSVD_err}--\ref{fig:re100_err_varying_r} 
support the theoretical results in Theorem~\ref{theorem:main-theorem}.

\begin{figure}[H]
    \centering
    \includegraphics[width=\textwidth]{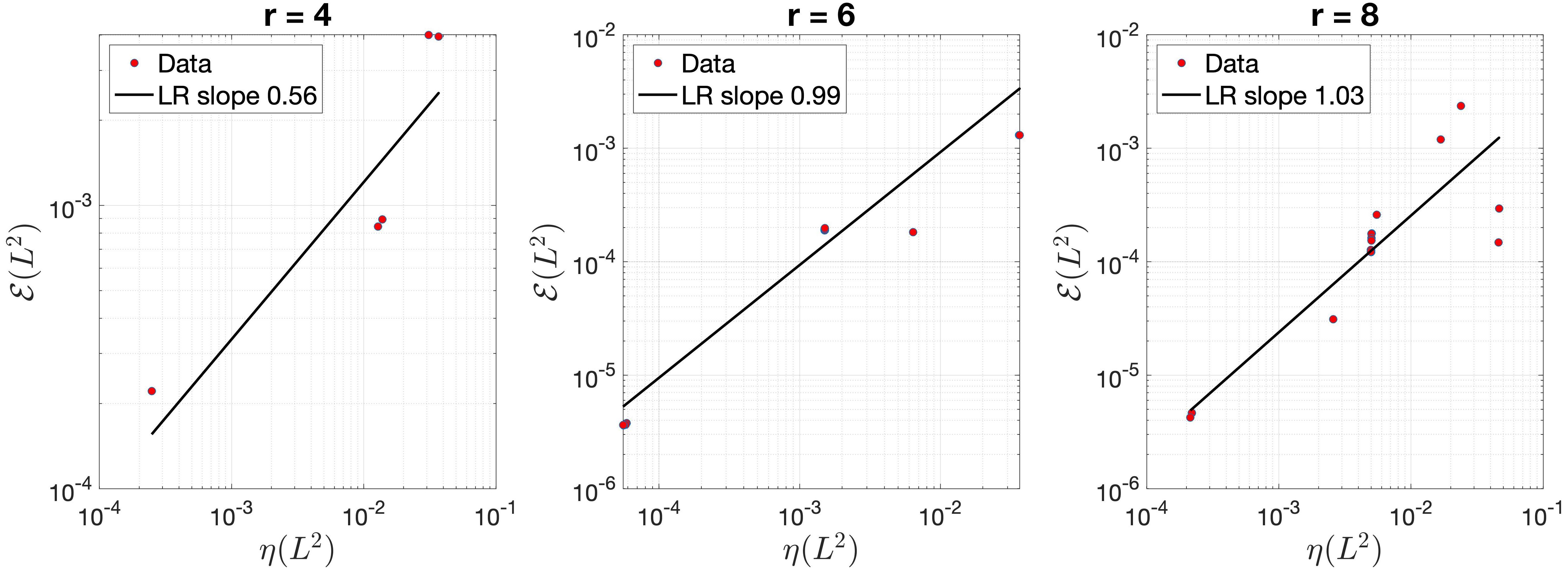}
    \caption{
	Flow  past  a  cylinder, $Re=100$,  reconstructive  regime:
	linear regression for $\mathcal{E} (L^2)$ and $\eta (L^2)$ for fixed $r$ values and different tolerance values in the truncated SVD.
    }
    \label{fig:lr-order-re100-constrained}
\end{figure}

\begin{figure}[H]
    \centering
    \includegraphics[width=\textwidth]{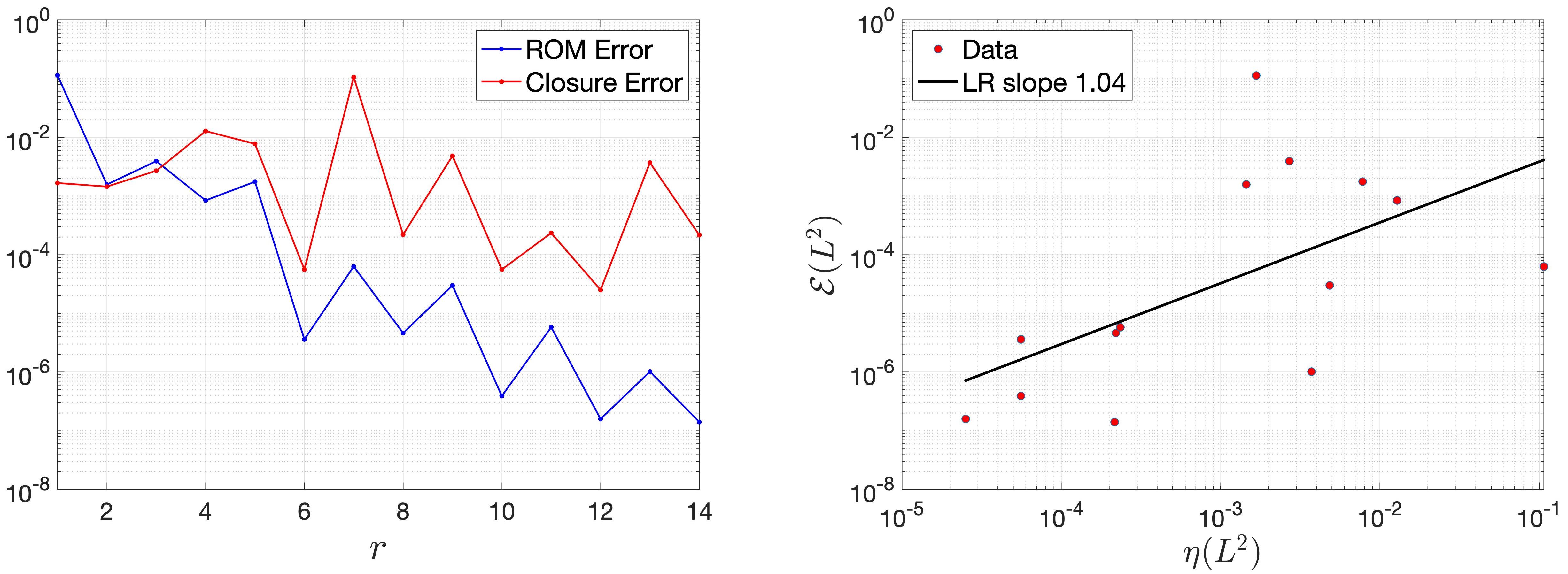}
    \caption{{Flow past a cylinder, $Re=100$, reconstructive regime:	
    $\mathcal{E} (L^2)$ and $\eta (L^2)$ values as $r$ 
    increases.  
    For each $r$, the tolerance index 
    $k$ in the truncated SVD is chosen 
    to minimize the corresponding ROM error $\mathcal{E} (L^2)$.}
    }
    \label{fig:re100_err_varying_r}
\end{figure}

\subsubsection{Numerical Results for  $Re=1000$} 
	\label{sec:numerical-results-nse-re1000}

{In Figure~\ref{fig:re1000-tSVD_err}}, for three different $r$ values, we {plot} $\mathcal{E} (L^2)$ in~\eqref{eqn:l2-error-v}, which measures the DD-VMS-ROM error, and $\eta (L^2)$ in \eqref{eq:tau-cal-v}, which measures the DD-VMS-ROM closure error.
To compute $\mathcal{E} (L^2)$ and $\eta (L^2)$, we fix the $r$ value and decrease the tolerance in the truncated SVD, which is used in the data-driven modeling part. 
As the tolerance decreases, we monitor the decaying rate of $\mathcal{E} (L^2)$ with respect to $\eta (L^2)$. 
The results in {Figure~\ref{fig:re1000-tSVD_err}}, for $r=4, 6$, and $8$, generally show that, as $\eta (L^2)$ decreases, so does $\mathcal{E} (L^2)$. 

\begin{figure}[H]
    \centering
    \includegraphics[width=\textwidth]{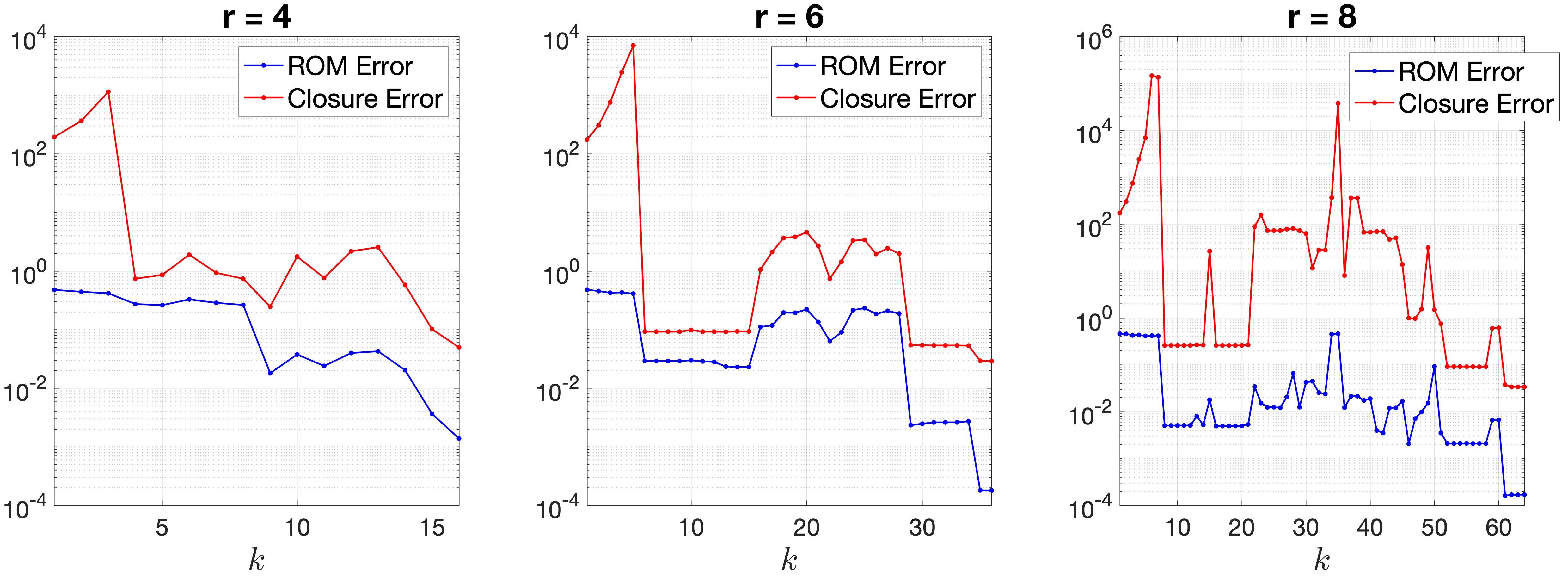}
    \caption{{Flow past a cylinder, $Re=1000$, reconstructive regime:	
    $\mathcal{E} (L^2)$ and $\eta (L^2)$ values for fixed $r$ values and different tolerance 
    index $k$ values in the truncated SVD.}
    }
    \label{fig:re1000-tSVD_err}
\end{figure}

In Figure~\ref{fig:lr-order-re1000-constrained}, for $r=4, 6$, and $8$, we plot the LR slope for $\mathcal{E}(L^2)$ with respect to $\eta(L^2)$.
For $r=4$, the LR slope is $1.71$, for $r=6$ the LR slope is $2.07$, and for $r=8$ the LR slope is $1.00$.
These results indicate that {$\mathcal{E}(L^2)$ decays at least linearly as $\eta(L^2)$ is reduced, again in agreement 
with \eqref{Eq_scaling_law} with $\alpha \ge 1$.}

{When we vary $r$ 
and choose the tolerance 
index $k$ in the truncated SVD to minimize the corresponding $\mathcal{E} (L^2)$, the 
results are shown in Figure~\ref{fig:re1000_err_varying_r}. This figure shows again that \eqref{Eq_scaling_law} holds with $\alpha \ge 1$ in this varying $r$ setting.} 

Overall, the results in Figures~\ref{fig:re1000-tSVD_err}--\ref{fig:re1000_err_varying_r} 
support the theoretical results in Theorem~\ref{theorem:main-theorem}, {
yielding} the same conclusion as that in Section~\ref{sec:numerical-results-nse-re100}.

\begin{figure}[H]
    \centering
    \includegraphics[width=\textwidth]{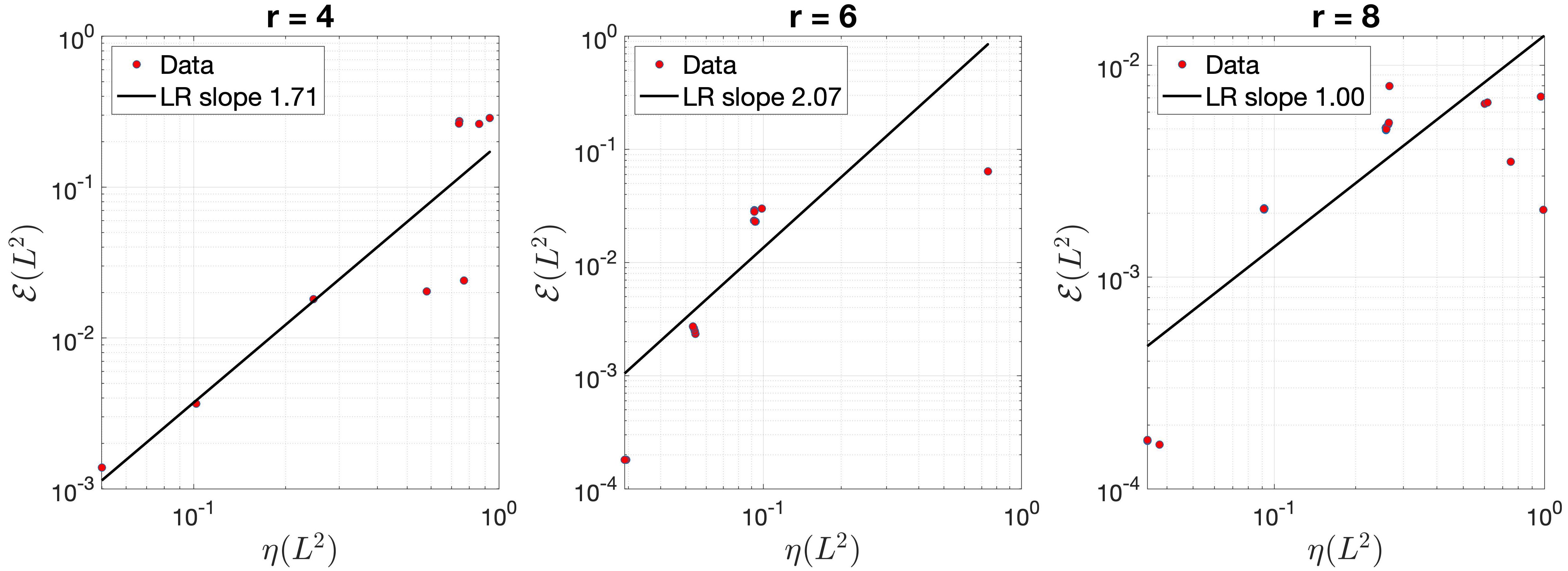}
    \caption{
    Flow  past  a  cylinder, $Re=1000$,  reconstructive  regime:
	linear regression for $\mathcal{E} (L^2)$ and $\eta (L^2)$ for fixed $r$ values and different tolerance values in the truncated SVD.
    }
    \label{fig:lr-order-re1000-constrained}
\end{figure}

\begin{figure}[H]
    \centering
    \includegraphics[width=\textwidth]{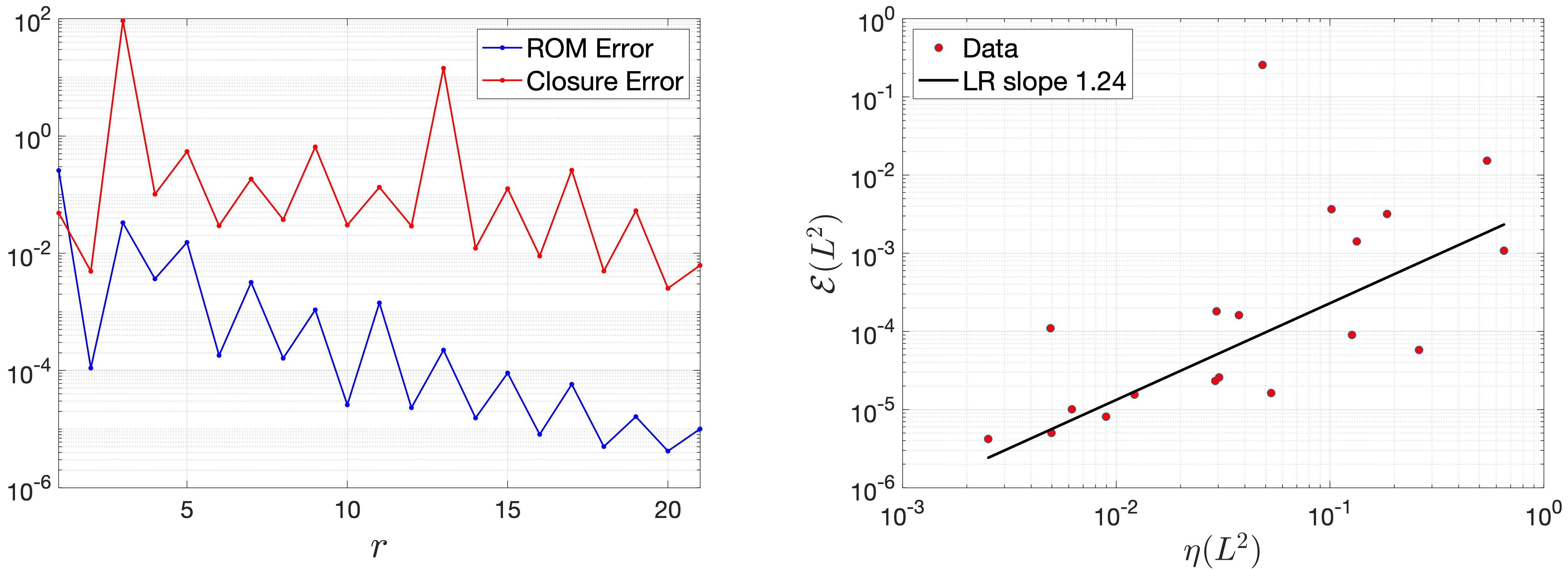}
    \caption{{Flow past a cylinder, $Re=1000$, reconstructive regime:	
    $\mathcal{E} (L^2)$ and $\eta (L^2)$ values as $r$ 
    increases. 
    For each $r$, the tolerance 
    index $k$ in the truncated SVD is chosen 
    to minimize the corresponding ROM error $\mathcal{E} (L^2)$.}
    }
    \label{fig:re1000_err_varying_r}
\end{figure}

\section{Conclusions and Future Work}
    \label{sec:conclusions}

Over the last two decades, a plethora of ROM closure models have been developed for reduced order modeling of convection-dominated flows.
Various ROM closure models have been constructed by using physical insight, mathematical arguments, or data.
Although these ROM closure models are built by using different arguments, they are constructed by using the same {\it heuristic} algorithm:
(i) In the offline stage, the ROM closure model is built so that it is as close as possible (in some norm) to the ``true" ROM closure term.
(ii) In the online stage, one needs to check whether the ROM closure model yields a ROM solution that is as close as possible to the filtered FOM solution.
If the ROM solution is an accurate approximation of the filtered FOM solution, the ROM closure model is deemed accurate.
This heuristic algorithm is the most popular approach used in assessing the success of the current ROM closure models.
However, a natural question is whether one can actually {\it prove} anything about these ROM closure models.
For example, can one prove that an accurate ROM closure model (constructed in the offline phase) yields an accurate ROM solution (in the online phase)?

In this paper, we took a step in this direction and answered the above question by extending the verifiability concept from classical LES to a ROM setting.
Specifically, we defined a ROM closure model as verifiable if the ROM error is bounded (in some norm) by the ROM closure model error.
Furthermore, we proved that a recently introduced data-driven ROM closure model (i.e., the DD-VMS-ROM~\cite{mou2021data,xie2018data}) is verifiable.
Finally, we showed numerically that the DD-VMS-ROM closure is verifiable.
Specifically, in the numerical simulation of the one-dimensional Burgers equation and the two-dimensional flow past a circular cylinder at Reynolds numbers $Re=100$ and $Re=1000$, we showed that by reducing the error in the ROM closure term, we can achieve a decrease in the ROM error, as predicted by the theoretical results.

There are several natural research directions that can be pursued in the quest to lay mathematical foundations for ROM closure models.
For example, one could investigate the verifiability of (functional, structural, or data-driven) ROM closure models that are different from the DD-VMS-ROM investigated in this paper.
One could also extend the verifiability concept to ROM closures that are built from experimental data.
In that case, one could replace the high-dimensional ``truth" solution used in this paper with the experimental solution interpolated onto a discrete mesh.
{
Another potential research direction is the investigation of different norms (e.g., the $H^1$ norm) in the least squares problem~\eqref{eqn:least-squares}, verifiability definition (i.e., Definition~\ref{definition:DD-VMS-ROM-verifiability}), and verfiability theorem (i.e., Theorem~\ref{theorem:main-theorem}).
}
Finally, one could consider other mathematical concepts that are used in classical LES (see, e.g.,~\cite{BIL05}) and extend them to a ROM setting.


\section*{Acknowledgements}

{We thank the reviewers for the insightful comments and suggestions, which have significantly improved the paper.}
The work of the first, second, and sixth authors was supported by NSF through grants DMS-2012253 and CDS\&E-MSS-1953113.
The third author acknowledges the support by NSF through grant DMS-2108856. 
The fifth author acknowledges the support by European Union Funding for Research and Innovation -- Horizon 2020 Program -- in the framework of European Research Council Executive Agency: Consolidator Grant H2020 ERC CoG 2015 AROMA-CFD project 681447 ``Advanced Reduced Order Methods with Applications in Computational Fluid Dynamics,'' the PRIN 2017  ``Numerical Analysis for Full and Reduced Order Methods for the efficient and accurate solution of complex systems governed by Partial Differential Equations'' (NA-FROM-PDEs), and the INDAM-GNCS project ``Tecniche Numeriche Avanzate per Applicazioni Industriali.''

\section*{Data Availability}
The datasets generated during 
the current study are available from the corresponding author on reasonable request.

\bibliographystyle{spmpsci}      

\bibliography{birgul,traian,honghu}

\end{document}